\theoremstyle{plain}
  \newtheorem{theorem}{Theorem}
  \newtheorem{proposition}[theorem]{Proposition}
\theoremstyle{definition}
  \newtheorem{remark}[theorem]{Remark}
  \newtheorem{example}[theorem]{Example}
\numberwithin{theorem}{section}
\newcommand{\R}{\mathbb{R}}
\newcommand{\scp}[2]{\left\langle{#1,\, #2}\right\rangle}
\newcommand{\I}{\operatorname{I}}
\let\phi\varphi
\let\rho\varrho
\renewcommand{\r}{\mathfrak{r}}
\newcommand{\fnum}{f^{\mathrm{num}}}
\newcommand{\fnumj}{f^{\mathrm{num},j}}
\newcommand{\fvolj}{f^{\mathrm{vol},j}}
\newcommand{\Fnumj}{F^{\mathrm{num},j}}
\newcommand{\Snumj}{S^{\mathrm{num},j}}
\newcommand{\VOL}{\mathrm{VOL}}
\newcommand{\SURF}{\mathrm{SURF}}
\newcommand{\Ekin}{E_{\mathrm{kin}}}
\newcommand{\st}{\operatorname{s.t.}\;}
\newcommand{\Ifc}{\operatorname{I}_{f \leftarrow c}}
\newcommand{\Icf}{\operatorname{I}_{c \leftarrow f}}
\newsavebox{\DelimiterBox}
\newlength{\DelimiterHeight}
\newlength{\DelimiterDepth}
\newsavebox{\ArgumentBox}
\newlength{\ArgumentHeight}
\newlength{\ArgumentDepth}
\newlength{\ResizedDelimiterHeight}
\newlength{\ResizedDelimiterDepth}
\newcommand{\encloseby}[3]{%
  \savebox{\ArgumentBox}{$\displaystyle #1$}%
  \settoheight{\ArgumentHeight}{\usebox{\ArgumentBox}}%
  \settodepth{\ArgumentDepth}{\usebox{\ArgumentBox}}%
  \savebox{\DelimiterBox}{#2}%
  \settoheight{\DelimiterHeight}{\usebox{\DelimiterBox}}%
  \settodepth{\DelimiterDepth}{\usebox{\DelimiterBox}}%
  \setlength{\ResizedDelimiterHeight}{%
    \maxof{1.2\ArgumentHeight}{\DelimiterHeight}%
  }
  \setlength{\ResizedDelimiterDepth}{%
    \maxof{1.2\ArgumentDepth}{\DelimiterDepth}%
  }
  \raisebox{-\ResizedDelimiterDepth}{%
    \resizebox{\width}{\ResizedDelimiterHeight+\ResizedDelimiterDepth}{%
      \raisebox{\DelimiterDepth}{#2}%
    }%
  }
  #1
  \raisebox{-\ResizedDelimiterDepth}{%
    \resizebox{\width}{\ResizedDelimiterHeight+\ResizedDelimiterDepth}{%
      \raisebox{\DelimiterDepth}{#3}%
    }%
  }
}
  \newcommand{\armean}[1]{\encloseby{#1}{$\{\mkern-5mu\{$}{$\}\mkern-5mu\}$}}
  \newcommand{\jump}[1]{\encloseby{#1}{$[\mkern-4mu[$}{$]\mkern-4mu]$}}
  \newcommand{\armean}[1]{\encloseby{#1}{$\{\mkern-6mu\{$}{$\}\mkern-6mu\}$}}
  \newcommand{\jump}[1]{\encloseby{#1}{$[\mkern-3mu[$}{$]\mkern-3mu]$}}
\newenvironment{keywords}{\par\textbf{Key words.}}{\par}
\newenvironment{AMS}{\par\textbf{AMS subject classification.}}{\par}
\newcommand{\PP}{\mathbb P}
\newcommand{\VV}{\mathcal{V}}
\newcommand{\mean}[1]{\overline{#1}}
\renewcommand{\SS}{\mathcal{S}}
\newcommand{\ResKs}{{{\Phi}^K_\sigma}}
\newcommand{\tildResKs}{{{\tilde{\Phi}}^K_\sigma}}
\newcommand{\EE}{\mathcal{E}}
\title{Reinterpretation and Extension of Entropy Correction Terms for Residual
       Distribution and Discontinuous Galerkin Schemes: Application to Structure Preserving Discretization}
\author{Rémi Abgrall, Philipp Öffner, Hendrik Ranocha}
\date{January 4, 2022}
\begin{document}

\maketitle

\begin{abstract}
For the general class of residual distribution (RD) schemes, including many
finite element (such as continuous/discontinuous Galerkin) and
flux reconstruction methods, an approach to construct entropy conservative/ dissipative
semidiscretizations by adding suitable correction terms has been proposed
by Abgrall (J.~Comp.~Phys. 372: pp. 640--666, 2018).
In this work,
the correction terms are characterized as solutions of certain optimization
problems and are adapted to the SBP-SAT framework,
focusing on discontinuous Galerkin methods.
Novel generalizations to entropy inequalities, multiple
constraints, and kinetic energy preservation for the Euler equations are developed
and tested in numerical experiments. For all of these optimization problems,
explicit solutions are provided. Additionally, the correction approach is applied for the first time to obtain a fully discrete entropy conservative/dissipative  RD scheme. Here, the application of the deferred correction (DeC) method for the time integration is essential. This paper can be seen as describing a systematic method to construct structure preserving discretization, at least for the  considered example.
\end{abstract}

\begin{keywords}
  entropy stability,
  kinetic energy preservation,
  conservation laws,
  residual distribution schemes,
  discontinuous Galerkin schemes,
  Euler equations
\end{keywords}

\begin{AMS}
  65M12,  %PDEs: IVPs & IBVPs, Stability and convergence of numerical methods
  65M60,  %PDEs: Finite elements, Rayleigh-Ritz and Galerkin methods, finite methods
  65M70,  %PDEs: Spectral, collocation and related methods
  65M06   %PDEs: Finite difference methods
\end{AMS}

\section{Introduction}

Consider a hyperbolic conservation law
\begin{equation}
\label{eq:CL}
  \partial_t u(t,x) + \sum_{j=1}^d \partial_j f^j\bigl( u(t,x) \bigr) = 0,
  \qquad
  t \in (0,T),\, x \in \Omega,
\end{equation}
in $d$ space dimensions such as the compressible Euler equations of gas dynamics,
where $u\colon (0,T) \times \Omega \to \Upsilon \subseteq \R^m$ are the conserved
variables, $f^j\colon \Upsilon \to \R^m$ the fluxes, and $t \in (0,T)$, $x \in \Omega
\subseteq \R^d$ the time and space coordinates, respectively. The
conservation law has to be equipped with appropriate initial and boundary conditions.

Given a convex entropy $U\colon \Upsilon \to \R$ with entropy variables
$w = \partial_u U$ and entropy fluxes $F^j\colon \Upsilon \to \R$ fulfilling
$\scp{w}{\partial_u f^j}=\partial_u F^j$,
smooth solutions of \eqref{eq:CL} satisfy $\partial_t U(u) + \sum_{j=1}^d
\partial_j F^j(u) = 0$ and the entropy inequality
\begin{equation}
\label{eq:entropy-inequality}
  \partial_t U(u) + \sum_{j=1}^d \partial_j F^j(u) \leq 0
\end{equation}
is used as admissibility criterion for weak solutions.
%, cf.\ \cite{dafermos2010hyperbolic}.
The mapping between the entropy variables $w = \partial_u U$ and the
conservative variables 
$u$ is one-to-one since $U$ is convex.

Since the seminal work of Tadmor~\cite{tadmor1987numerical,tadmor2003entropy},
there has been interest in techniques to mimic \eqref{eq:entropy-inequality}
for semidiscretizations of hyperbolic conservation laws. Some recent contributions
are, e.g.
\cite{carpenter2016towards, sjogreen2018high,chan2018discretely}.
Recently, relaxation Runge--Kutta methods have been proposed to transfer such
semidiscrete entropy conservation/dissipation  (SEC/D) results to fully discrete (FEC/D)
schemes \cite{ketcheson2019relaxation,ranocha2020relaxation,ranocha2020general,ranocha2020fully}.
Other possibilities are  to apply artificial viscosity or modal
filtering in an adaptive way
\cite{glaubitz2016artificial,offner2019analysis,sun2019enforcing}.

The EC/D semidiscretizations cited above
are built on the framework of EC numerical fluxes in
the sense of Tadmor
and their extension to
higher order methods \cite{ lefloch2002fully,
fernandez2014generalized, fisher2013high,
ranocha2018comparison, chen2017entropy}. However, these require special
quadrature rules in a finite element setting and cannot be applied to
all kinds of semidiscretizations and all kind of grids.
Additionally, their construction can
become rather complicated for complex physical models or even impossible, especially if
multiple secondary quantities are of interest, e.g. the entropy and the
kinetic energy for the Euler equations or further constraints such as
on the angular momentum.

In this article, the correction terms enforcing entropy conservation of
numerical methods in the general class of residual distribution (RD) schemes
proposed by Abgrall~\cite{abgrall2018general} and modifications suggested
in \cite{ranocha2020strong} are extended and analyzed. These schemes do not require
special quadrature rules nor grid structures and provide a general toolbox to enhance given
schemes with additional desired properties.

We characterize the entropy correction terms as solutions of optimization
problems, introducing different variants of this approach. Additionally,
new applications and generalizations are developed and compared in numerical
experiments with up to date methods.
For all of these optimization problems, explicit analytical
solutions are provided, resulting in reasonable schemes.

This article is structured as follows. Firstly, the numerical schemes and entropy
correction terms are introduced in Section~\ref{sec:correction-terms}, starting
with RD in Section~\ref{subsec:RD}. Here, already the extension to FEC/D using the correction term in the DeC-RD framework is explained.  Thereafter, discontinuous
element based schemes such as discontinuous Galerkin (DG) methods are described
in Section~\ref{subsec:DG} and the characterizations of entropy correction terms
as solutions of optimization problems are developed. Generalizations to
entropy inequalities, multiple linear constraints, and kinetic energy preservation
for the compressible Euler equations are developed in Section~\ref{sec:generalizations}.
Numerical examples using all these schemes are presented in
Section~\ref{sec:numerical-examples}. In Section~\ref{sec:remark}, 
we give some motivating examples why a new formulation to obtain EC/D
numerical schemes is useful.
We summarize and discuss our results in Section~\ref{sec:summary},
presenting also some directions of further research.
Additionally, we demonstrate how the correction terms can be used as a
procedure to grid refinement and coarsening in \autoref{sec:appendix}, yielding
EC/D grid transfer operations including numerical examples.

As indicated above, there are currently many entropy conservative and entropy
dissipative numerical fluxes; some are even kinetic energy compatible.
Thus, one may wonder why we develop a new solution. A literature check indicates
that all available solutions assume a calorically perfect gas. However, there are
many cases, such as combustion problems or multiphase flow, where the equation
of state is not that of a calorically perfect gas. In Section~\ref{sec:remark},
taking the example of one of these entropy conservative numerical fluxes,
we will point out that all works simply because of the special structure of the
flux in the calorically perfect gas case. If it is certainly possible, as this
has been done in the evaluation of the Roe average for non calorically perfect gas,
to workout such extensions, there will be cases where some ambiguity will still
exists (as it is the case for really nonlinear EOS, or tabulated ones), and in
any case, this will be a case-by-case analysis. If one wants to add additional
constraints, such as kinetic energy compatibility, or the local preservation
of kinetic momentum\footnote{The kinetic momentum also satisfies a conservation
law that is the consequence of the Euler equations.}, everything will need to
start from scratch, with additional constraints such as those proposed in
\cite{Jameson} for kinetic energy global conservation or \cite{ranocha2021preventing}
for pressure equilibria. The framework we propose in this paper completely avoids this.

\section{Entropy Corrections for Numerical Schemes}
\label{sec:correction-terms}

We will describe existing formulations of entropy correction terms for RD \&
DG schemes and present an interpretation in terms of a quadratic minimization
problem.

\subsection{Nodal Formulation: Residual Distribution Schemes}
\label{subsec:RD}

The first introduction of RD schemes
can be found in Roe's seminal work \cite{roe1981approximate}
and in the paper by Ni \cite{ni1981multiple}. Since then, further developments
have been done for generalization and to reach high order in the discretization, cf.\ \cite{abgrall2018general,abgrall2011construction} and references therein. The main advantage of the RD approach is the abstract formulation of
the schemes, working only with the degrees of freedom (DOFs).
The selection of approximation/solution space and the definition of the residuals
specifies the scheme completely and thus the properties of the considered methods.
Today, the RD ansatz provides a unifying framework including some -- if not most -- of
the up-to-date used high order methods like continuous/discontinuous Galerkin methods
and flux reconstruction schemes \cite{abgrall2018connection}.
% Also the schemes which will be considered in Section~\ref{subsec:DG}
% can be recast into this framework. However, to follow the approach of
% Abgrall~\cite{abgrall2018general}, where the entropy correction term is introduced for
% the first time, \one{the same notation and the general RD approach is applied which
% will be recalled briefly.}

\subsubsection*{Residual Distribution Schemes}

A classical time/space splitting using the method of lines will destroy
the order of accuracy of the RD approach. Hence,
the RD approach will be explained first for a steady state problem. After
introducing RD methods and the entropy corrections in this framework,
we will consider a temporal discretization using the deferred correction (DeC)
method following \cite{abgrall2017high, abgrall2017high2} for time-dependent
problems, including application of entropy corrections for these fully discrete
schemes. We will compare this DeC RD method with the variant of
\cite{ricchiuto2010explicit} using Runge--Kutta schemes in the numerical
experiments.

Consider the steady state problem
\begin{equation}
\label{eq:CL_steady}
 \sum_{j=1}^d \partial_j f^j\bigl( u(x) \bigr) = 0,
  \qquad
  x \in \Omega,
\end{equation}
of a hyperbolic conservation law \eqref{eq:CL} with suitable boundary conditions.
First, the domain $\Omega$ is split into subdomains $\Omega_l$ (e.g.\ simplex
or quad/hex elements in two/three dimensions).
$K$ denotes any generic element of the mesh and $h$ characterizes the  mesh size.
Boundary elements are denoted as $\Gamma$. Then, the DOFs $\sigma$  are defined  with respect to the splitting and the weights in
each $K$. For each $K$, the set of DOFs $\sum_K$ is given by linear forms acting on the set
$\PP^k$ of polynomials of degree $k$ such that the linear mapping
$q\in \PP^k\longmapsto (\sigma_1(q),\cdots, \sigma_{|\sum_K|}(q))$ is one-to-one.
$\SS$ denotes the set of DOFs in all elements.
The solution $u$ is approximated by an element of the space
\begin{equation}\label{eq:solution_space}
\VV^h:=\bigoplus_{K} \left\{ u^h \in L^2(K), u^h|_K \in \PP^k   \right\}.
\end{equation}
A linear combination of basis functions $\varphi_\sigma\in \VV^h$  is used to form the numerical solution
\begin{equation}\label{eq:solution_approx}
u^h(x)=\sum_{K\in \Omega_l }\sum_{\sigma \in K} u_{\sigma}^h  \varphi_{\sigma}|_K(x), \quad \forall{x \in \Omega},
\end{equation}
where the coefficients $u_{\sigma}^h$ must be found by a numerical method. Therefore, the residuals
come finally into play.
Now, the RD scheme can be formulated by the following three steps to
calculate the coefficients $u_{\sigma}^h$.
\begin{enumerate}
\item Define for any $K$ the total residual %\footnote{This term is also referred to as fluctuation term in the literature \cite{abgrall2003high}.} 
$\Phi^K$ of $\int_K \sum_{j=1}^d \partial_j f^j\bigl( u^h\bigr)$, i.e.
\begin{equation*}
\Phi^K =\oint_{\partial K} \sum_{j=1}^d f^j( u^h\bigr) \cdot \nu_j.
\end{equation*}
In the following, $\oint$ will be used to denote the discrete evaluation of
integrals by some quadrature rule.
Examples are given in \cite{abgrall2018general} and below.

\item Split the total residual into sub-residual $\ResKs$ for each degree of freedom $\sigma \in K$,
so that the sum of all the contributions over an element $K$ is the fluctuation term itself, i.e. 
for any element $K$
and any $u^h\in \VV^h$, we have
\begin{equation}\label{eq:residual_basic}
    \sum\limits_{\sigma \in K} \ResKs(u^h) =\oint_{\partial K}\sum\limits_j \fnumj \left(u_{|K}^h, u_{|K^-}^h \right) \cdot \nu_j,
\end{equation}
where $u_{|K}^h$ is the restriction of $u^h$ in the element $K$, $u_{|K^-}^h$
is the restriction of $u^h$ on the other side of the local edge/face of $K$,
and $\nu_j$ is the $j$-th component of the outer unit normal vector $\nu$ at $\partial K$.
In addition, $\fnum$ is a consistent numerical flux, i.e. $\fnumj(u,u)=f^j(u)$.

\item The resulting scheme is finally obtained by summing all sub-residuals of one degree of freedom from different elements $K$, i.e.
\begin{equation}\label{eq:total_scheme}
\sum_{K| \sigma \in K} \ResKs(u^h) = 0, \qquad \forall \sigma.
\end{equation}
The term
\eqref{eq:total_scheme} allows to calculate the coefficients $ u_{\sigma}^h$ in the  numerical
 approximation \eqref{eq:solution_approx} iteratively.
\end{enumerate}
% \begin{figure}[!htp]
%\centering
%  \begin{subfigure}[b]{0.3\textwidth}
%\includegraphics[trim = 50mm 100mm 5mm 20mm, clip, width=\textwidth]{Figures/RD_Step1_bigger.pdf}
%    \caption{Step 1: Compute the total residual.}
%  \end{subfigure}%
%  ~
%  \begin{subfigure}[b]{0.3\textwidth}
%    \includegraphics[width=\textwidth]{Figures/RD_2.png}
%    \caption{Step 2: Split the total residual.}
%  \end{subfigure}%
%    ~
%  \begin{subfigure}[b]{0.3\textwidth}
%    \includegraphics[width=\textwidth]{Figures/RD_3.png}
%    \caption{Step 3: Combine the residuals.}
%  \end{subfigure}%
%  \caption{Illustration of the three steps (total residual, nodal residuals, gathering residuals) of the RD approach for linear triangular elements.}
%  \label{RD_steps}
%\end{figure}
If $\sigma \in \Gamma$, we write
\begin{equation}\label{eq:scheme}
 \sum_{K\in \Omega|\sigma \in K }\ResKs(u^h)+ \sum_{\Gamma \in \partial \Omega|\sigma \in \Gamma} \Phi^{\Gamma}_{\sigma}(u^h)=0,
\end{equation}
where $\Phi^{\Gamma}_{\sigma}$ denotes the boundary residual. They satisfy similar equations  as \eqref{eq:residual_basic}, see \cite{abgrall2018general} for details.
The RD scheme is described by \eqref{eq:total_scheme}--\eqref{eq:scheme}. This often results in a large system of nonlinear equations, which can be solved by an ad-hoc iterative method. To specify the method (FV, DG, etc.) completely, the  solution space \eqref{eq:solution_space}  (and its basis)  has to be chosen
and the exact definition of the  residuals $\ResKs$ has to be given.
For example, a  DG scheme in the RD framework is specified by choosing
the solution space  $\VV^h$ from \eqref{eq:solution_space},
the internal residuals
\begin{equation}\label{eq:Residual_DG}
  \ResKs(u^h) =-\oint_K \partial_j \varphi_\sigma \cdot f^j(u^h)+\oint_{\partial K } \varphi_\sigma  \fnumj (u^h_{|_K}, u^h_{|_{K^-}})\cdot \nu_j,
  \end{equation}
and the boundary residuals
 $ \Phi^{\Gamma}_{\sigma}(u^h) =\displaystyle\oint_\Gamma  \varphi_\sigma \bigl( \fnumj ( u^h_{|_\Gamma}, u_b) -f^j(u^h) \cdot \nu_j \bigr).$
%  \end{equation*}

%However, the conservation relation has to be always fulfilled. For any element $K$
%and any $u^h\in \VV^h$,
%\begin{equation}\label{eq:residual_basic}
%    \sum\limits_{\sigma \in K} \ResKs(u^h) =\oint_{\partial K} \fnumj \left(u_{|K}^h, u_{|K^-}^h \right) \cdot \nu_j,
%\end{equation}
%where $u_{|K}^h$ is the restriction of $u^h$ in the element $K$, $u_{|K^-}^h$
%is the restriction of $u^h$ on the other side of the local edge/face of $K$,
%and $\nu_j$ is the $j$-th component of the outer unit normal vector $\nu$ at $\partial K$.
%In addition, $\fnum$ is a consistent numerical flux, i.e. $\fnumj(u,u)=f^j(u)$,
%and summation over repeated indices is implied. Similar equations \eqref{eq:residual_basic} hold for the boundary residuals, see \cite{abgrall2018general} for details.

\subsubsection*{Entropy Correction Term}

In \cite{abgrall2018general}, the author presented an approach to construct EC/D schemes in a general framework.
Therefore, a correction term is added to the scheme $\ResKs$ at every DOF $\sigma \in K$ to ensure that the scheme fulfills discretely the entropy condition \eqref{eq:entropy-inequality}. In terms of RD, an entropy conservative scheme%
\footnote{An entropy dissipative semidiscretization has an inequality in 
\eqref{eq:entropy-inequality}. In this part,  the steady state case \eqref{eq:CL_steady} is considered.}
fulfills 
\begin{equation}\label{eq:entropy_condition}
\sum_{\sigma\in K }\scp{w_\sigma}{\ResKs}=  \oint_{\partial K} \Fnumj  \left(w_{|K}^h, w_{|K^-}^h \right) \nu_j,
\end{equation}
where $\Fnumj$ is a numerical entropy flux and
$w_{\sigma}$ is the entropy variable at the DOF $\sigma$.
In general, the conservation relations \eqref{eq:residual_basic}
and \eqref{eq:entropy_condition} are not compatible. To achieve both,
we manipulate the residuals as follows.
The correction term $ r_\sigma^K$ is added to the  residual $\ResKs$ at every
degree of freedom, such that the corrected residual
\begin{equation}\label{eq:residual_corr}
\tildResKs= \ResKs+r_\sigma^K
\end{equation}
fulfills the discrete entropy condition \eqref{eq:entropy_condition}.
In \cite{abgrall2018general}, the following correction term is introduced (remember $w$ is the entropy variable)
\begin{gather}
  \label{eq:correction}
  r_\sigma^K = \alpha(w_\sigma -\mean{w} ),
  \quad \text{with }
  \mean{w} = \frac{1}{\#K}  \sum_{\sigma \in K } w_\sigma,
  \\
  \label{eq:error_abgrall}
  \alpha = \frac{\EE}{\sum_{\sigma \in K } \left(w_\sigma  -\mean{w} \right)^2},
  \quad
  \EE:= \oint_{\partial K} \Fnumj  \left(w_{|K}^h, w_{|K^-}^h \right) \nu_j -
  \sum_{\sigma\in K }\scp{w_\sigma}{\ResKs}.
\end{gather}

\begin{theorem}%\label{Theo:Abgrall}
 If $w$ is constant in $K$, $\alpha=0$ is chosen. Otherwise the correction term
  \eqref{eq:correction} with \eqref{eq:error_abgrall} satisfies
  \begin{equation}\label{eq:system}
  \begin{split}
      \sum_{\sigma \in K}r_\sigma^K= 0,
      \qquad
      \sum_{\sigma \in K} \scp{w_\sigma}{r_\sigma^K}= \EE.
  \end{split}
  \end{equation}
  By adding \eqref{eq:correction} to the residual $\ResKs$,
  the resulting scheme using $\tildResKs$ is locally conservative in $u$ and
  entropy conservative.
\end{theorem}
\begin{proof}
  The relation $\eqref{eq:system}$ defines a linear system of equations with
  always at least two unknowns. It is enough to show that \eqref{eq:correction}
  with \eqref{eq:error_abgrall} is a valid solution. 

  The conservation relation for the new scheme is guaranteed because of
  \begin{equation*}
    \sum_{\sigma \in K}r_\sigma^K= \sum_{\sigma \in K}\alpha(w_\sigma -\mean{w} )
    =
    \alpha  \sum_{\sigma \in K}w_\sigma -\sum_{\sigma\in K}  \mean{w} =0.%sum_{\sigma \in K } w_\sigma \right) \right)
   % \\
   % =
   % \alpha \left(\sum_{\sigma \in K}w_\sigma -\sum_{\sigma \in K}w_\sigma\right)
   % =
   % 0.
  \end{equation*}
  The entropy condition is satisfied, since
  \begin{equation}
    \sum_{\sigma \in K} \scp{w_\sigma}{r_\sigma^K} %= \sum_{\sigma \in K} \scp{w_\sigma}{\alpha(w_\sigma -\mean{w} )}
    =
    \alpha \sum_{\sigma \in K} \scp{w_\sigma}{(w_\sigma -\mean{w} )}
   % \\
    =
    \alpha \!
    \sum_{\sigma \in K}\! \scp{w_\sigma-\mean{w}}{w_\sigma -\mean{w}}
    %+ \biggl\langle\! \mean{w}, \underbrace{\sum_{\sigma \in K}(w_\sigma-\mean{w})}_{=0} \biggr\rangle\!
   % =
    %\frac{\EE}{\displaystyle \sum_{\sigma \in K}\! (w_\sigma-\mean{w})^2}
    %\sum_{\sigma \in K}\! (w_\sigma-\mean{w})^2
    =
    \EE.
  \end{equation}
  It is obvious that $\tildResKs$ fulfills the entropy condition \eqref{eq:entropy_condition}.
  \end{proof}
% \begin{remark}
%   The error behaviour of $\EE$ can be controlled through the used numerical quadrature, see \cite{abgrall2018general} for details.
% \end{remark}
% Later we will come to this again, in the case of multiple constraints,  and considering  uniqueness.
%We focus on this in the case of multiple constraints.}
 % To finish this section, we present an example.
%\begin{example}
%\label{ex:RD-DG}
%  To express  a discontinuous Galerkin scheme in the RD framework, one has to specify the solution space and the residual in detail.
%  The space $\VV^h$ is given by \eqref{eq:solution_space} and
%  the residual is given by
%  \begin{equation*}%\label{eq:Residual_DG}
%  \ResKs(u^h) =-\oint_K \partial_j \varphi_\sigma \cdot f^j(u^h)+\oint_{\partial K } \varphi_\sigma  \fnumj (u^h_{|_K}, u^h_{|_{K^-}})\cdot \nu_j.
%  \end{equation*}
%  The boundary residuals are
%%  \begin{equation*}%\label{eq:Residual_DG_Boundary}
% $\one{ \Phi^{\Gamma}_{\sigma}(u^h) =\displaystyle\oint_\Gamma  \varphi_\sigma \left( \fnumj ( u^h_{|_\Gamma}, u_b) -f^j(u^h) \cdot \nu_j \right).}$
%%  \end{equation*}
%\end{example}

Again, it should be pointed out that the correction term given in \eqref{eq:correction}
is universally applicable. No further restrictions of the grid structure, point distribution of the DOFs, or the
scheme are needed. It can be applied to any scheme including DG, CG, SUPG, and FR as described in \cite{abgrall2018connection}.
Therefore, it is a universal tool for classical baseline schemes as described above.
Only the error behavior of $\EE$ has to be considered. However, it can be controlled through the applied quadrature, see \cite{abgrall2018general} for details.
To obtain an entropy dissipative scheme, jump or streamline diffusion terms can be
added to the correction term, cf.\ \cite{abgrall2018general}.
In this paper, we will sometimes use the jump diffusion $
\mathcal{J}_\sigma= \theta h_k^2 \oint_{\partial_K} \jump{\nabla \varphi_\sigma} \jump{\nabla w^h}.
 $

\subsection*{Explicit Space-Time Residual Distribution Method}
The DeC method will be used together with the RD framework, resulting in an explicit space-time FE methods \cite{abgrall2017high} with similarities and connections to the modern ADER approach \cite{veiga2020dec}.
The main idea of DeC is based on the Picard Lindelöf theorem;
minimizing the error in a correction algorithm until one reaches the desired
order of accuracy.

To describe the method, the interval between the timesteps  $t^n$ and $t^{n+1}$  will be split into $M$ subintervals given by $t^n = t^{n,0} < t^{n,1} < ... < t^{n,M} = t^{n+1}$.
For each subtimestep $[t^{n,l},t^{n,l+1}]$, there are $K$ corrections
$k = 0,...,M$, where  $U^{n,r,(k)}$ is the $k$-th correction of the $r$-th substep. Furthermore,
\begin{equation}
U^{(k)} := (U^{n,1,(k)},...,U^{n,M,(k)}).
\end{equation}
Then, two operators $\mathcal{L}^1$ and $\mathcal{L}^2$
are introduced. Here,  $\mathcal{L}^1$ is a first order method which is explicit and easy to solve while
 $\mathcal{L}^2$ yields an implicit, high order time integration scheme.
Then, the following DeC procedure is applied.
\begin{enumerate}
\item Set $U^{(0)} = (U^{n,1,0},...,U^{n,M,0}) = (U^n,...,U^n)$.
\item For each correction step $k = 1,...,K$, define $U^{(k+1)}$ as the solution of:
\begin{equation}\label{step2}
\mathcal{L}^1(U^{(k+1)}) = \mathcal{L}^1(U^{(k)})  -  \mathcal{L}^2(U^{(k)}) .
\end{equation}
\item Set $U^{n+1} = U^{n,M,(K)}$.
\end{enumerate}
Finally, the definitions of $\mathcal{L}_\sigma^1$ and $\mathcal{L}_\sigma^2$ are necessary, taking the steady state space residuals $\Phi_{\sigma,x}^K$  into account,
where
 \begin{equation*}
 \begin{gathered}
    \mathcal{L}_\sigma^1(U^{(k)})
    =
 \begin{pmatrix}
|C_\sigma|(U^{n,M,(k)} - U^{n,0}) + \Delta t \beta_M \sum\limits_{K|\sigma \in K} 
 \Phi_{\sigma,x}^K(U^{n,0}) \\\ 
\vdots \\\ 
|C_\sigma|(U^{n,1,(k)} - U^{n,0}) + \Delta t \beta_1\sum\limits_{K|\sigma \in K}
\Phi_{\sigma,x}^K(U^{n,0})
\end{pmatrix},
  \\
      \mathcal{L}_\sigma^2(U^{(k)})
    =
 \begin{pmatrix}
 \sum\limits_{K|\sigma \in K} \left( \oint_K \phi_\sigma \left(U^{n,M,(k)}-U^{n,0} \right)  + \Delta t
 \sum\limits_{r=0}^M \theta_r^M \Phi_{\sigma,x}^K(U^{n,r,(k)}) \right) \\\ 
\vdots \\\ 
\sum\limits_{K|\sigma \in K} \left(\oint_K \phi_\sigma \left(U^{n,1,(k)}-U^{n,0} \right)+ \Delta t
\sum\limits_{r=0}^M \theta_r^1 \Phi_{\sigma,x}^K(U^{n,r,(k)})\right)
\end{pmatrix} .
\end{gathered}
\end{equation*}
Here, $ |C_\sigma|:=\oint_K \phi_\sigma$, and  $ \beta_i$, $\theta_r^{i}$ are the quadrature weights for the time integration.
%\\
%A simpler version of $\mathcal{L}_\sigma^1$ can 
%be achieved if the interpolating polynomial
%$\mathcal{I}_0$ is just a constant 
%approximation at $U^{n,0}$ such
%that $\int_{t_{n,0}}^{t_{n,l}}\mathcal{I}_0\left( \phi_\sigma^K(U^{(m)}),s \right) ds $ 
%becomes $\xi_l \Delta t  \phi_\sigma^K(U^{n,0})$ as before. 
%This makes the operator $\mathcal{L}_\sigma^1$ explicit and 
The $l$-th line of \eqref{step2} simply becomes:
\begin{equation}\label{oneline}
U_\sigma^{n,l,m+1} = U_\sigma^{n,l,m} -
|C_\sigma|^{-1} 
\sum_{K|\sigma \in K}\bigg(\oint_K \varphi_\sigma (U^{n,l,m} - U^{n,0}) + 
\Delta t \sum_{r=0}^M \theta_{r}^{l} \Phi_{\sigma,x}^K(U^{n,r,(k)}) \bigg).
\end{equation}

\begin{remark}
There are two variants of entropy correction terms in \eqref{oneline}.
\begin{enumerate}
\item The correction
term can be applied only to the space residual $ \Phi_{\sigma,x}^K$, which results in a SEC/D-scheme. To obtain the desired FEC/D scheme,  the relaxation approach
\cite{ranocha2020general, abgrall2021relaxation} or  artificial viscosity \cite{glaubitz2016artificial} can be used in this framework. Another possibility would be the application of implicit methods  \cite{kuzmin2020entropy, nordstrom2013summation}.
\item The second possibility is to use the correction term  not only for the space residual, but for the whole bracket, resulting in a FEC scheme.
\end{enumerate}
\end{remark}
However, it should be stressed that this paper is focusing on extending the correction term in the semidiscrete  setting. Therefore, mainly
SEC/D methods will be investigated in numerical simulations especially in combination with RK schemes.
A combination with the relaxation approach \cite{abgrall2021relaxation} can also be done and will be considered in another paper.
Nevertheless, some tests are performed using the second option, demonstrating
the universal applicability of correction terms. Therefore, a possible algorithm
will be described in the following.

%This is also done for the first time and an explanation how this work will be described in the Appendix for the  Euler equations.

%To obtain a FEC scheme, there would be several possibilities. Actually, one can use  implicit methods based on SBP operators \cite{nordstrom2013summation} or the relaxation technique proposed by Ketcheson \cite{ketcheson2019relaxation}. However, also the proposed correction term can be used in such way when considering the DeC-RD formulation
%\begin{equation*}\label{oneline}
%U_\sigma^{n,l,m+1} = U_\sigma^{n,l,m} -
%|C_\sigma|^{-1} 
%\sum_{K|\sigma \in K}\bigg(\oint_K \varphi_\sigma (U^{n,l,m} - U^{n,0}) + 
%\Delta t \sum_{r=0}^M \theta_{r}^{l} \Phi_{\sigma,x}^K(U^{n,r,(k)} \bigg).
%\end{equation*}
The basic idea is to apply the correction not only to the space residual
$\Phi_{\sigma,x}^K(U^{n,r,(k)})$ but to the complete space-time residual
\begin{equation}\label{eq:corr}
\sum_{K|\sigma \in K}\bigg(\oint_K \varphi_\sigma (U^{n,l,m} - U^{n,0}) + 
\Delta t \sum_{r=0}^M \theta_{r}^{l} \Phi_{\sigma,x}^K(U^{n,r,(k)}) \bigg).
\end{equation}
To explain how this works, we shortly describe the algorithm in our update step.
This approach benefits from the fact that we have not applied the method of lines but are running an explicit space-time FE approach. Further, we neglect here the conversion between different
variables (entropy, conservative, control, etc.) and the used coordinated system (reference, Cartesian, barycentric) to simplify the algorithm. Be aware that depending on the applied code, including these can become challenging at least from our personal experience.
The update procedure is
\begin{enumerate}
\item Compute the entropy difference $\eta(U^{(k)})- \eta(U^0)$ at every DOF.
\item Calculate the entropy flux using $U^{(k)}$ at every degree of freedom.
\item Calculate the differences in the entropy in every element $K$ using  the space-time entropy residual $\Phi_{t,x}^{K,e}$.
\item Use the correction term with the calculated entropy differences to correct the space-time residual  \eqref{eq:corr}.
\end{enumerate}
By doing this in every step, the entropy is conserved in space and time and we obtain the desired result.

\subsection{Operator Formulation: Discontinuous Element Based Schemes}
\label{subsec:DG}

Besides the RD formulation, 
another focus of this paper  lies on element based discretizations using
the SBP-SAT framework. Therefore, it will be briefly summarized
and the notations differing from above will be explained.
Again, the domain $\Omega$ is partitioned into non-overlapping subcells 
$\Omega_l \subseteq \Omega$ where the following discrete operators are used
on each element (dropping the elemental index $l$ for convenience).

\begin{itemize}
  \item
  A symmetric and positive definite mass matrix $M$, approximating the $L^2$
  scalar product via $\int_{\Omega_l} u(x) v(x) \dif x = \scp{u}{v}_{L^2(\Omega_l)}
  \approx \scp{u}{v}_M = u^T M v$.

  \item
  Derivative matrices $D_j$, approximating the partial derivative $\partial_j u
  \approx D_j u$.

  \item
  A restriction/interpolation operator $R$, performing interpolation to the
  boundary nodes at $\partial\Omega_l$ via $R u$.

  \item
  A symmetric and positive definite boundary mass matrix $B$, approximating
  the scalar product on $L^2(\partial\Omega_l)$.

  \item
  Multiplication operators $N_j$, $j \in \set{1, \dots, d}$, representing
  the multiplication of functions on the boundary $\partial\Omega_l$ by the
  $j$-th component $\nu_j$ of the outer unit normal $\nu$ at $\partial\Omega_l$.
\end{itemize}
Together, the restriction and boundary operators approximate the boundary
integral with respect to the outer unit normal as in the divergence theorem, i.e.
\begin{equation*}
  u^T R^T B N_j R v
  \approx
  \int_{\partial\Omega} u \, v \, \nu_j.
\end{equation*}
If the SBP property
\begin{equation}
\label{eq:SBP}
  M D_j + D_j^T M
  =
  R^T B N_j R
\end{equation}
is fulfilled, the divergence theorem is mimicked on a discrete
level \cite{fernandez2014generalized}.
Using the SBP property \eqref{eq:SBP}, one can transfer
stability results established at the continuous level to the discrete level,
cf.\ \cite{svard2014review,chen2020review} and references cited therein.
% For example, some discontinuous Galerkin
% schemes can be formulated in this way \cite{gassner2013skew}, allowing the transfer
% of stability results established at the continuous level to the discretisation, cf.
% \cite{svard2014review,fernandez2014review} and references cited therein.
The general semidiscretizations considered here can be written as
\begin{equation}
\label{eq:operator-semidiscretisation}
  \partial_t u
  =
  \VOL + \SURF,
\end{equation}
where $\VOL$ are volume terms discretizing $\partial_j f^j(u)$ and $\SURF$ are
surface terms implementing interface/boundary conditions weakly. For the $i$-th
conserved variable $u_i$, $i \in \set{1, \dots, m}$, the corresponding
semidiscretization is $\partial_t u_i = \VOL_i + \SURF_i.$

\begin{example}
%\label{ex:central-DG}
  A central nodal DG scheme using the numerical (surface) fluxes $\fnumj$ can be
  obtained by choosing
  \begin{equation*}
    \VOL = -D_j f^j,
    \quad
    \SURF = - M^{-1} R^T B N_j (\fnumj - R f^j).
  \end{equation*}
  This is nothing else than a classical nodal DG formulation as described in  \cite{hesthaven2007nodal}.
\end{example}

\begin{example}
%\label{ex:flux-differencing}
  A flux differencing or split form discretization using symmetric two-point numerical
  volume fluxes $\fvolj$ and numerical surface fluxes $\fnumj$ can be obtained
  following \cite{fisher2013high,gassner2016split} as
  \begin{equation}\label{eq:flux_splitting}
    \VOL^{(m)} = - 2 \sum_{j=1}^d \sum_k {D_j}_{m,k} \fvolj\bigl( u^{(m)}, u^{(k)} \bigr),
    \;
    \SURF = - M^{-1} R^T B N_j (\fnumj - R f^j),
  \end{equation}
  where the upper indices $(m), (k)$ indicate the grid node.
  The properties of the schemes depend strongly on the
  numerical fluxes
  and a lot of effort has been made to construct
  numerical fluxes with desirable properties \cite{ranocha2018comparison}.
  Further, a close connection to the nodal DG framework (or FD) can be seen using \eqref{eq:flux_splitting}.
  Actually, this approach will lead to a split (or skew-symmetric)
  semidiscretization of a nodal DG scheme. Examples
  and further explications can be found in
  \cite{ranocha2017shallow, ranocha2018thesis}
  and later in this article.
\end{example}
Up to now, no additional conditions on the semidiscretization \eqref{eq:operator-semidiscretisation}
have been formulated.  Similar to the RD 
setting described, the focus lies on \emph{local conservation} and \emph{entropy conservation}.
\begin{itemize}
 \item 
The discretization should be locally conservative:
\begin{equation}
\label{eq:operator-conservation-target}
  \forall i \in \set{1, \dots, m}\colon
  \qquad
  1^T M \partial_t u_i
  =
  - \sum_{j}1^T R^T B N_j \fnumj_i,
\end{equation}
where the indices stands for %$\fnumj_i$ is the numerical flux for the%
the $i$-th 
  conserved variable  in the $j$-th
%coordinate 
direction.% $j$.
\item An entropy conservative semidiscretization fulfils 
\begin{equation}
\label{eq:operator-entropy-target}
  w_i^T M \partial_t u_i
  =
  - \sum_j1^T R^T B N_j \Fnumj,
\end{equation}
where %a summation over the repeated indices is implied and
 $\Fnumj$ are
numerical entropy fluxes corresponding to $\fnumj$
\cite{tadmor1987numerical,tadmor2003entropy} and the  mass matrix $M$ is diagonal, which will assumed in the following.
\end{itemize}

The basic idea of \cite{abgrall2018general} will be embedded in
this framework. The key is to enforce
\eqref{eq:operator-entropy-target} for any semidiscretization via the addition of a
correction term $\r$ that is consistent with zero and does not violate the conservation
relation \eqref{eq:operator-conservation-target}, resulting in
\begin{equation}
\label{eq:operator-semidiscretisation-corrected}
  \partial_t u_i
  =
  \VOL_i + \SURF_i + \r_i.
\end{equation}
Using the mass matrix $M$ for discrete integration
\cite{ranocha2020strong}, the correction term is
\begin{equation}
\label{eq:operator-entropy-correction}
\begin{gathered}
  \r_i = \alpha \left(
    w_i - \frac{1^T M w_i}{1^T M 1} 1
  \right),
  \quad
  \alpha = \frac{\mathcal{E}}{w_k^T M w_k
                        - \frac{(1^T M w_k) (1^T M w_k)}{1^T M 1}},
  \\
  \mathcal{E} = - 1^T R^T B N_j \Fnumj - w_k^T M \, \VOL_k - w_k^T M \, \SURF_k.
\end{gathered}
\end{equation}
If the denominator of $\alpha$ in \eqref{eq:operator-entropy-correction} is zero,
the numerical solution is constant in the element because of the Cauchy Schwarz
inequality (in that case, $1$ and $w_k$ are linearly dependent for each $k$).
If the scheme is chosen to have a continuous behavior over
the element boundaries as in continuous Galerkin or SUPG methods,
no further considerations are necessary since the constraints
\eqref{eq:operator-conservation-target} and
\eqref{eq:operator-entropy-target} do not contradict each other.
No special care has to be taken and the correction is always possible.
However, one has to be more careful for DG schemes, as described in the
following remark.

\begin{remark}\label{re:Tadmor_restriction}
\label{rem:EC-fluxes-necessary}
  Possible contradictions of \eqref{eq:operator-conservation-target}
  and \eqref{eq:operator-entropy-target} can be studied in the setting of
  EC schemes in the sense of Tadmor.
  There can only be
  problems if the denominator of $\alpha$ vanishes. In that case, all
  $w_i$ are proportional to $1$ inside an element and the scheme
  reduces to a finite volume scheme using the numerical fluxes $\fnumj$
  for such an element:  \eqref{eq:operator-entropy-target} has to hold.
  This is satisfied for the schemes investigated in this section, if the
  numerical surface fluxes $\fnumj$ are EC and $\Fnumj$
  the corresponding numerical entropy fluxes in the sense of Tadmor,
  i.e. if
  \begin{gather}
    (w_{i,+} - w_{i,-}) \fnumj_i(w_-, w_+) = \psi^j_+ - \psi^j_-,
    \\
    \Fnumj = \frac{w_{i,+} + w_{i,-}}{2} \fnumj_i(w_-, w_+) - \frac{\psi^j_+ + \psi^j_-}{2},
  \end{gather}
  where $\psi^j$ is the flux potential $\psi^j = w_i f^j_i - F^j$.
  Indeed, if the numerical solution is constant in an element, DG type schemes
  such as in Example~\ref{rem:EC-fluxes-necessary} result in a finite volume
  scheme
  \begin{equation*}
    \partial_t u = - M^{-1} R^T B N_j (\fnumj - R f^j),
  \end{equation*}
  which is conservative because of $1^T R^T B N_j R f^j = 0$ since the divergence
  theorem has to hold for constants because of consistency. Additionally,
  \begin{multline*}
    w_i^T M \partial_t u_i
    =
    - w_i^T R^T B N_j \fnumj_i + w_i^T R^T B N_j R f^j_i
    \\
    =
    - w_i^T R^T B N_j \fnumj_i + 1^T R^T B N_j R \psi^j.
  \end{multline*}
  Hence, it suffices to consider one boundary node. There,
  \begin{equation}\label{eq:Tadmor}
  \begin{aligned}
    \psi^j_\pm - w_{i,\pm} \fnumj_i(w_-, w_+)
    &=
    \psi^j_\pm
    - \left(
      \frac{w_{i,+} + w_{i,-}}{2} \pm \frac{w_{i,+} - w_{i,-}}{2}
    \right) \fnumj_i(w_-, w_+)
    \\
    &=
    \psi^j_\pm - \frac{w_{i,+} + w_{i,-}}{2} \fnumj_i(w_-, w_+)
    \mp \frac{\psi^j_+ - \psi^j_-}{2}
    \\
    &=
    \frac{\psi^j_+ + \psi^j_-}{2} - \frac{w_{i,+} + w_{i,-}}{2} \fnumj_i(w_-, w_+)
    =
    - \Fnumj.
  \end{aligned}
  \end{equation}
\end{remark}

\begin{remark}
 Not only Tadmor's framework yields a solution to this.
%In  his framework, the numerical entropy flux is given by 
%$ \Fnumj= \{w \}\cdot \fnumj-  \{\psi^j \}$ where $ \{\cdot \}$
%denotes the average of the quantities at the interfaces, compare \eqref{eq:Tadmor}.
 One can also take  $\Fnumj= \{w \}\cdot \fnumj- \psi^j (\{ w\})$.
 At it is shown in \cite{abgrall2018general} by a simple Taylor expansion analysis, there is no contradiction using instead this interpretation.
\end{remark}

% \begin{remark}
%   In a continuous Galerkin setting in the RD framework, the constraints
%   \eqref{eq:operator-conservation-target} and
%   \eqref{eq:operator-entropy-target} do not contradict each other because
%   of continuity across element boundaries and no special care has to be
%   taken. Hence, a desired correction is always possible.
% \end{remark}

\begin{remark}\label{re:no_problem}
Finally, it should be mentioned that 
  the constraints \eqref{eq:operator-conservation-target} and
  \eqref{eq:operator-entropy-target} can only contradict each other if
  the numerical solution is constant inside an element. In that case,
  one can also decide to drop the EC constraint
  \eqref{eq:operator-conservation-target} since a reasonable baseline
  scheme should give acceptable results. Then, no special choice of numerical
  fluxes at the boundaries is necessary.
\end{remark}

\begin{theorem}
\label{thm:optimization-operator-entropy}
  Let $\mathcal{E} \neq  0$ and  $w$ be not constant.
  The correction term $\r = (\r_1, \dots, \r_m)$ \eqref{eq:operator-entropy-correction} is the unique
solution of the 
  the minimization problem 
    \begin{equation}
  \label{eq:optimization-operator-entropy}
    \min_{\r} \frac{1}{2} \norm{\r}_M^2
    \quad
    \st 1^T M \r_i = 0,\; w_i^T M \r_i = \mathcal{E},
  \end{equation}
  with $\norm{\r}_M^2 = \r_i^T M \r_i$
  such that \eqref{eq:operator-conservation-target}
  and \eqref{eq:operator-entropy-target} are satisfied.
%   It is measured in the discrete norm induced by $M$.
%  the correction term $\r$ \eqref{eq:operator-entropy-correction} is the unique
%  optimal correction of \eqref{eq:operator-semidiscretisation-corrected}, measured
%  in the discrete norm induced by $M$, such that \, i.e. $\r = (\r_1, \dots, \r_m)$
%  \eqref{eq:operator-entropy-correction} is the unique solution of
\end{theorem}

\begin{proof}
  Equation \eqref{eq:optimization-operator-entropy} can be reformulated as
  \begin{equation*}
    \min_{\r} \frac{1}{2} \r^T (\I_m \otimes M) \r
    \;
    \st A \r = b,
    \quad
    A = \begin{pmatrix} \I_m \otimes (1^T M) \\ w^T (\I_m \otimes M) \end{pmatrix},
    b = \begin{pmatrix} 0 \\ \mathcal{E} \end{pmatrix}.
  \end{equation*}
  Since $M$ is symmetric \& positive definite, the unique solution satisfies
  \cite[Section~16.1]{nocedal1999numerical}
  \begin{equation*}
    \begin{pmatrix}
      (\I_m \otimes M) & -A^T \\
      A & 0
    \end{pmatrix}
    \begin{pmatrix}
      \r \\
      \lambda
    \end{pmatrix}
    =
    \begin{pmatrix}
      0 \\
      b
    \end{pmatrix}
  \end{equation*}
  for some $\lambda \in \R^{m+1}$. Hence, $\r$ must satisfy the $m+1$ constraints
  $1^T M \r_i = 0, w^T M \r = \mathcal{E}$ and $M \r_i$ must be in the span of
  $\set{M 1, M w_i}$ such that the coefficients of $M w_i$ are the same for all
  $i \in \set{1, \dots, m}$. This is obviously true for $\r_i$ as defined in
  \eqref{eq:operator-entropy-correction}.
\end{proof}
\begin{remark}
In the following, we write
``The constraints do not contradict each other''
to make clear that the minimization problem has a unique solution.
To specify this more precise, we assume that $w$ is not constant.
\end{remark}

Using the same argument used in the proof of
Theorem~\ref{thm:optimization-operator-entropy}, one obtains
\begin{proposition}
 % If $\mathcal{E} = 0$ in \eqref{eq:error_abgrall} whenever $w$ is constant in $K$,
If the constraints do not contradict each other,
  the correction term $r_\sigma^K$ \eqref{eq:correction} is the unique optimal correction
  of \eqref{eq:residual_corr}, measured in the discrete norm induced by the identity
  matrix $\I$, such that \eqref{eq:entropy_condition} and
  $\sum_{\sigma \in K} r_\sigma^K = 0$ are satisfied, i.e. $r = (r_\sigma^K)_\sigma$
  \eqref{eq:correction} is the solution of
  \begin{equation}
  \label{eq:optimization-RD-entropy}
    \min_{r} \frac{1}{2} \norm{r}_{\I}^2
    \quad
    \st 1^T \I r_\sigma^K = 0,\; w_\sigma^T \I r_\sigma^K = \mathcal{E},
    \qquad
    \norm{r}_{\I}^2 = (r_\sigma^K)^T \I r_\sigma^K.
  \end{equation}
\end{proposition}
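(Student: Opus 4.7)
The plan is to mirror the argument used for Theorem~\ref{thm:optimisation-operator-entropy}, replacing the mass matrix $M$ with the identity $\I$ throughout, so that the result falls out almost immediately from the KKT conditions for a linearly constrained quadratic program.

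First I would recast the minimisation as a QP in standard form. Writing $r = (r_\sigma^K)_{\sigma \in K}$ and stacking the entropy variables $w = (w_\sigma)_{\sigma \in K}$, the problem reads
\begin{equation*}
  \min_{r} \tfrac{1}{2}\, r^T \I\, r
  \quad \st \quad
  A r = b,
  \qquad
  A = \begin{pmatrix} 1^T \\ w^T \end{pmatrix},
  \quad
  b = \begin{pmatrix} 0 \\ \mathcal{E} \end{pmatrix}.
\end{equation*}
Because $\I$ is symmetric positive definite and the feasible set is non-empty (by the hypothesis that the two constraints do not contradict each other, precisely as invoked in Theorem~\ref{Theo:Abgrall}), the objective is strictly convex on an affine set and a unique minimiser exists.

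Next I would apply the standard KKT characterisation \cite[Section~16.1]{nocedal1999numerical} already used in the operator proof. The optimality system
\begin{equation*}
  \begin{pmatrix} \I & -A^T \\ A & 0 \end{pmatrix}
  \begin{pmatrix} r \\ \lambda \end{pmatrix}
  =
  \begin{pmatrix} 0 \\ b \end{pmatrix}
\end{equation*}
yields $r = A^T \lambda$ for some $\lambda \in \R^2$. Translated back to nodal form this says $r_\sigma^K = \beta + \gamma\, w_\sigma$ for two scalars $\beta,\gamma$, i.e.\ the optimal correction lies in $\mathrm{span}\{1, w\}$. The two multipliers are then pinned down by enforcing the constraints $\sum_\sigma r_\sigma^K = 0$ and $\sum_\sigma \scp{w_\sigma}{r_\sigma^K} = \mathcal{E}$: the first gives $\beta = -\gamma\, \mean{w}$, so that $r_\sigma^K = \gamma (w_\sigma - \mean{w})$, and substituting into the second yields $\gamma \sum_\sigma (w_\sigma - \mean{w})^2 = \mathcal{E}$, which is precisely the coefficient $\alpha$ of \eqref{eq:error_abgrall}.

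Finally I would observe that the denominator in \eqref{eq:error_abgrall} vanishes exactly when $w$ is constant in $K$, in which case the assumption $\mathcal{E}=0$ makes the entropy constraint trivial, the problem degenerates, and the unique minimiser is $r_\sigma^K = 0$, matching \eqref{eq:correction}. I expect no genuine obstacle here: the argument is a direct transcription of the proof of Theorem~\ref{thm:optimisation-operator-entropy} with $M$ replaced by $\I$, and the only subtle point is the well-posedness of $\alpha$, which is handled by the same non-degeneracy hypothesis used there.
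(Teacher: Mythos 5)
Your proposal is correct and follows essentially the same route as the paper, which proves this proposition simply by remarking that the argument of Theorem~\ref{thm:optimisation-operator-entropy} carries over with $M$ replaced by $\I$: recast as a linearly constrained QP, invoke the KKT system of \cite[Section~16.1]{nocedal1999numerical} to place $r$ in $\operatorname{span}\{1,w\}$, and solve for the coefficients to recover \eqref{eq:correction} with \eqref{eq:error_abgrall}. Your explicit elimination of $\beta$ and $\gamma$ and the discussion of the degenerate constant-$w$ case are slightly more detailed than the paper's one-line justification, but not a different argument.
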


  There are some differences between the role of the correction terms $\r_i$
  described in this section and the terms $r_\sigma^K$ used in the previous section.
  Firstly, since the correction $\r_i$ \eqref{eq:operator-entropy-correction} is
  added to the other side of the hyperbolic equation, the sign differs from the
  correction term $r_\sigma^K$ \eqref{eq:correction} of \cite{abgrall2018general}.
  Secondly, $\r_i$ \eqref{eq:operator-entropy-correction} is a correction for the
  pointwise time derivative of $u$ while $r_\sigma^K$ \eqref{eq:correction} is a
  correction for an integrated version. Loosely speaking, they are related via
  \begin{equation}\label{eq:approx_two}
    r_\sigma^K  \simeq M \r_i.
  \end{equation}
  Additionally, the role of the indices differs: $\r_i$ is a correction term
  for the $i$-th variable at all grid nodes while $r_\sigma^K$ is a correction term
  at the grid node $\sigma$ for all variables (if a DG setting is used for
  the RD scheme).
  
  Using the notation of this section, the correction term $r_\sigma^K$ \eqref{eq:correction} corresponds to
  \begin{equation}
  \label{eq:correction-abgrall-as-operator}
  \begin{gathered}
    \r_i = \alpha \left(
      M^{-1} w_i - \frac{1^T w_i}{1^T 1} M^{-1} 1
    \right),
    \quad
    \alpha = \frac{\mathcal{E}}{w_k^T w_k
                          - \frac{(1^T w_k) (1^T w_k)}{1^T 1}},
    \\
    \mathcal{E} = 1^T R^T B N_j \Fnumj - w_k^T \Phi_k.
  \end{gathered}
  \end{equation}
  While $r_\sigma^K$ \eqref{eq:correction} is the optimal correction with respect
  to the identity matrix, its corresponding pointwise correction
  $\r_i$ \eqref{eq:correction-abgrall-as-operator} is optimal with respect to
  the norm induced by $M$.

  Using the notation of RD schemes, the entropy correction term
  $\r_i$ \eqref{eq:operator-entropy-correction} corresponds to
  \begin{equation}
  \label{eq:operator-entropy-correction-RD}
  \begin{gathered}
    r_\sigma^K = \alpha \left(
      \oint_K w_\sigma \phi_\sigma
      - \frac{\sum_{\rho \in K} \oint_K w_\rho \phi_\rho}
             {\sum_{\rho \in K} \oint_K \phi_\rho}
        \oint_K \phi_\sigma
    \right)
    =
    \alpha \left( w_{\sigma}
      - \frac{\sum_{\rho \in K} \oint_K w_\rho \phi_\rho}
             {\sum_{\rho \in K} \oint_K \phi_\rho}
    \right) \oint_K \phi_\sigma,
    \\
    \alpha = \frac{
      - \oint_{\partial K} \Fnumj\bigl(w_{|K}^h, w_{|K^-}^h \bigr) \nu_j
      + \sum_{\sigma\in K }\scp{w_\sigma}{\ResKs}
    }{
      \sum_{\sigma,\rho \in K} \oint_K \abs{w_\sigma}^2 \phi_\sigma
      - %\frac{1}{\oint_K 1}
      |K|^{-1} \left( \sum_{\sigma \in K} \oint_K (w_\sigma \phi_\sigma) \right) \cdot \left( \sum_{\rho \in K} \oint_K (w_\rho \phi_\rho) \right)
    },
  \end{gathered}
  \end{equation}
  in accordance with \eqref{eq:approx_two}.
  Hence, \eqref{eq:operator-entropy-correction} uses an integral weighting (by the
  quadrature rule) instead of a summation without weights.
  While $\r_i$ \eqref{eq:operator-entropy-correction} is the optimal correction
  with respect to the mass matrix $M$, its corresponding integral correction
  $r_\sigma^K$ \eqref{eq:operator-entropy-correction-RD} is optimal with respect to
  the norm induced by $M^{-1}$.
  
%\end{remark}
%
%\begin{remark}
%\item 
  We would like to stress that \eqref{eq:operator-entropy-correction} and
  \eqref{eq:correction} are explicit solutions of the corresponding
  optimization problems \eqref{eq:optimization-operator-entropy} and
  \eqref{eq:optimization-RD-entropy}, respectively. No optimization solver
  is necessary to solve these problems. Hence, the computational cost of
  the new approach using a weighting by the quadrature rule is basically the
  same as for the approach suggested in \cite{abgrall2018general}.
%  \end{enumerate}
%\end{remark}

\begin{remark}\label{rem:singh2019kinetic}
By Theorem~\ref{thm:optimization-operator-entropy}, the entropy correction
terms can be interpreted as a solution of a quadratic minimization problem with
equality constraints. The idea of solving such a problem can also be exploited
for many different applications.
In \autoref{sec:grid-refinement}, an application of grid refinement and coarsening
is presented.
A combination of split forms and correction terms similar to the ones described here
has been presented for the kinetic energy for the Euler equations in \cite{singh2019kinetic} .
\end{remark}

\subsection{Finite Difference and Global Spectral Collocation Schemes}
\label{subsec:FD-SC}

Classical single block finite difference and spectral collocation schemes can be
interpreted as RD or DG schemes described in Sections~\ref{subsec:RD} and \ref{subsec:DG}
with one element. In that case, the entropy corrections described above yield
globally conservative and globally EC/D schemes.

Sadly, \emph{global conservation} and a \emph{global entropy inequality} do not imply any sort
of convergence towards an entropy solution of scalar conservation laws, even if
the scheme converges. This will be demonstrated by the following example.
\begin{example}
  Consider Burgers' equation $\partial_t u + \partial_x u^2/2 = 0$ on $[0,3]$
  with periodic boundary conditions and the initial condition 
  \begin{equation*}
    u_0(x)
    =
    \begin{cases}
      -1, & \text{if } 1 < x < 2, \\
      +1, & \text{else}.
    \end{cases}
  \end{equation*}
  The unique entropy solution contains a stationary shock at $x = 1$ and a
  rarefaction wave starting at $x = 2$.

  Central periodic finite difference and Fourier collocation schemes can be
  represented by a skew-symmetric derivative operator $D$ and mass matrix
  $M \propto \I$. Hence, there is no difference between the approaches
  \eqref{eq:correction} and \eqref{eq:operator-entropy-correction}.
  Since $u^2$ is constant, a classical central scheme yields a stationary numerical
  solution. The entropy correction vanishes, too, since $\mathcal{E}$ is zero
  (because of periodic boundary conditions). Hence, the same stationary numerical
  solution is obtained if the entropy correction is added.
  If an element based scheme is used instead, the numerical solution with entropy
  correction term cannot be stationary if the grid is fine enough (obtained by
  increasing the number of elements), since the difference of numerical entropy
  boundary fluxes does not vanish if the element contains exactly one initial
  discontinuity.
\end{example}

\section{Generalizations}
\label{sec:generalizations}
In this section, some generalizations of the entropy correction terms based on
the interpretations as a quadratic optimization problem will be developed. Here,
the notation of Section~\ref{subsec:DG} for discontinuous element based schemes
will be used. Please, keep in mind that in case of DG schemes,
the issues described in Remarks \ref{re:Tadmor_restriction}--\ref{re:no_problem}
have to be taken into account.

\subsection{Inequality Constraints}

In many applications, the main interest lies in
an entropy \emph{inequality} instead of EC schemes, resulting in
some kind of stability estimates. For example, even if the baseline scheme is
not necessarily EC/D in general, it can be dissipative in some
cases. Then, it could be beneficial to preserve this dissipation introduced
by the baseline scheme. Moreover, it could be possible to obtain better approximations
with smaller corrections if some entropy dissipation is allowed. Instead of
\eqref{eq:optimization-operator-entropy} in Theorem~\ref{thm:optimization-operator-entropy},
such an optimization problem is
\begin{equation}
\label{eq:optimization-operator-entropy-inequality}
  \min_{\r} \frac{1}{2} \norm{\r}_M^2
  \quad
  \st 1^T M \r_i = 0,\; w_i^T M \r_i \leq \mathcal{E}.
\end{equation}
While a solution of \eqref{eq:optimization-operator-entropy-inequality} is still
conservative, i.e. \eqref{eq:operator-conservation-target} holds, the entropy
inequality
\begin{equation*}
%\label{eq:operator-entropy-target-inequality}
  w_i^T M \partial_t u_i
  \leq
  - 1^T R^T B N_j \Fnumj
\end{equation*}
holds instead of the entropy equality \eqref{eq:operator-entropy-target}.
The next theorem simply states now  that
  the semidiscretization \eqref{eq:operator-semidiscretisation-corrected} obtained
  by the correction solving \eqref{eq:optimization-operator-entropy-inequality}
  is given by the unmodified method if it is entropy dissipative and by the
  entropy conservative scheme \eqref{eq:operator-entropy-correction} if the
  baseline scheme produces spurious entropy (per element).
 Its proof is just a paraphrase of this observation.
\begin{theorem}
\label{thm:optimization-operator-entropy-inequality}
  If the constraints do not contradict each other, the optimization problem
  \eqref{eq:optimization-operator-entropy-inequality}
  has a unique solution $\r$, which is given by $\r = 0$ if $\mathcal{E} > 0$
  and \eqref{eq:operator-entropy-correction} if $\mathcal{E} \leq 0$.
\end{theorem}
%\begin{proof}
%  Since $M$ is symmetric and positive definite, there is a unique solution.
%  Apply the active set method described in \cite[Section~16.4]{nocedal1999numerical}
%  with feasible initial value of $\r$ given in \eqref{eq:operator-entropy-correction} to
%  \begin{equation*}
%    \min_{\r} \frac{1}{2} \norm{\r}_M^2
%    \quad
%    \st 1^T M \r_i = 0,\; -w_i^T M \r_i \geq -\mathcal{E}.
%  \end{equation*}
%  Since \eqref{eq:operator-entropy-correction} solves the optimization problem
%  \eqref{eq:optimization-operator-entropy}, the sign of the Lagrange multiplier
%  associated with $-w_i^T M \r_i \geq -\mathcal{E}$ determines the next step:
%  \begin{itemize}
%    \item
%    If the Lagrange multiplier is $\geq 0$, the unique solution has been found.
%
%    \item
%    Otherwise, drop the constraint $-w_i^T M \r_i \geq -\mathcal{E}$ and compute
%    the solution of
%    \begin{equation*}
%      \min_{\r} \frac{1}{2} \norm{\r}_M^2
%      \quad
%      \st 1^T M \r_i = 0,
%    \end{equation*}
%    which is obviously $\r = 0$.
%  \end{itemize}
%  Finally, note that the Lagrange multiplier associated with
%  $-w_i^T M \r_i \geq -\mathcal{E}$ is $-\alpha$, where $\alpha$ is given in
%  \eqref{eq:operator-entropy-correction} and satisfies
%  $\sign(\alpha) = \sign(\mathcal{E})$.
%\end{proof}

\subsection{Multiple Constraints}

A generalization of the approach of
Theorem~\ref{thm:optimization-operator-entropy} to multiple linear constraints
is straightforward. This is demonstrated for two constraints
\begin{equation}
\label{eq:operator-2-entropies-target}
  w_i^T M \r_i = \mathcal{E},
  \quad
  \widetilde{w}_i^T M \r_i = \widetilde{\mathcal{E}}
\end{equation}
in addition to \eqref{eq:operator-conservation-target}. Here, $\widetilde{\mathcal{E}}$
is the difference of desired and current values for a linear constraint similar
to $\mathcal{E}$ in \eqref{eq:operator-entropy-correction}.
For example, $\widetilde{\mathcal{E}}$ could be caused by a correction for the
kinetic energy for the Euler equations, cf.\ $\mathcal{E}$ in
\eqref{eq:operator-Ekin-correction} below.

\begin{theorem}
\label{thm:optimization-operator-2-entropies}
  If the constraints \eqref{eq:operator-conservation-target} and
  \eqref{eq:operator-2-entropies-target} do not contradict each other,
  the unique solution $\r = (\r_1, \dots, \r_m)$ of
  \begin{equation}
  \label{eq:optimization-operator-2-entropies}
    \min_{\r} \frac{1}{2} \norm{\r}_M^2
    \quad
    \st 1^T M \r_i = 0,\; w_i^T M \r_i = \mathcal{E}, \;
    \widetilde{w}_i^T M \r_i = \widetilde{\mathcal{E}},
  \end{equation}
  is given by
  %\begin{equation*}
  %\label{eq:correction-operator-2-entropies}
  $  \r_i
    =
    \alpha \left( w_i - \frac{1^T M w_i}{1^T M 1} 1 \right)
    + \widetilde{\alpha} \left( \widetilde{w}_i - \frac{1^T M \widetilde{w}_i}{1^T M 1} 1 \right),$
 % \end{equation*}
  where
  \begin{equation*}
    \begin{pmatrix}
      \alpha \\
      \widetilde{\alpha}
    \end{pmatrix}
    =
    \begin{pmatrix}
      w_i^T M w_i - \frac{(1^T M w_i) (1^T M w_i)}{1^T M 1} &
      w_i^T M \widetilde{w}_i - \frac{(1^T M w_i) (1^T M \widetilde{w}_i)}{1^T M 1} \\
      \widetilde{w}_i^T M w_i - \frac{(1^T M \widetilde{w}_i) (1^T M w_i)}{1^T M 1} &
      \widetilde{w}_i^T M \widetilde{w}_i - \frac{(1^T M \widetilde{w}_i) (1^T M \widetilde{w}_i)}{1^T M 1}
    \end{pmatrix}^{-1}
    \begin{pmatrix}
      \mathcal{E} \\
      \widetilde{\mathcal{E}}
    \end{pmatrix}.
  \end{equation*}
\end{theorem}
\begin{proof}
  Equation \eqref{eq:optimization-operator-2-entropies} can be reformulated as
  \begin{equation*}
    \min_{\r} \frac{1}{2} \r^T (\I_m \otimes M) \r
    \;
    \st A \r = b,
    \quad
    A = \begin{pmatrix} \I_m \otimes (1^T M) \\ w^T (\I_m \otimes M) \\ \widetilde{w}^T (\I_m \otimes M) \end{pmatrix},
    b = \begin{pmatrix} 0 \\ \mathcal{E} \\ \widetilde{\mathcal{E}} \end{pmatrix}.
  \end{equation*}
  As in the proof of Theorem~\ref{thm:optimization-operator-entropy}, $\r$ must
  satisfy the constraints $1^T M \r_i = 0, w^T M \r = \mathcal{E},
  \widetilde{w}_i^T M \r_i = \widetilde{\mathcal{E}}$ and $M \r_i$ must be in the span of
  $\set{M 1, M w_i, M \widetilde{w}_i}$ such that the coefficients of $M w_i$
  and $M \widetilde{w}_i$ are the same for all $i \in \set{1, \dots, m}$, respectively.
  Hence,
%  \begin{equation*}
   $ \r_i = \alpha w_i + \widetilde{\alpha} \widetilde{w}_i + c_i$,
  %\end{equation*}
  where
 $ %\begin{multline*}
    1^T M \r_i = 0$, i.e.
  %  \iff
 $   \alpha 1^T M w_i
    + \widetilde{\alpha} 1^T M \widetilde{w}_i
    + c_i 1^T M 1 = 0$
   % \\
   % \iff
   leads to $$
    c_i = - \frac{\alpha 1^T M w_i + \widetilde{\alpha} 1^T M \widetilde{w}_i}{1^T M 1}.$$
 % \end{multline*}
  Finally, $\alpha$ and $\widetilde{\alpha}$ have to solve
  \begin{equation*}
  \begin{aligned}
    \left( w_i^T M w_i - \frac{(1^T M w_i) (1^T M w_i)}{1^T M 1} \right) \alpha
    + \left( w_i^T M \widetilde{w}_i - \frac{(1^T M w_i) (1^T M \widetilde{w}_i)}{1^T M 1} \right) \widetilde{\alpha}
    &= \mathcal{E},
    \\
    \left( \widetilde{w}_i^T M w_i - \frac{(1^T M \widetilde{w}_i) (1^T M w_i)}{1^T M 1} \right) \alpha
    + \left( \widetilde{w}_i^T M \widetilde{w}_i - \frac{(1^T M \widetilde{w}_i) (1^T M \widetilde{w}_i)}{1^T M 1} \right) \widetilde{\alpha}
    &= \widetilde{\mathcal{E}},
  \end{aligned}
  \end{equation*}
  proving the assertion.
\end{proof}

\begin{remark}
  $ $
  \begin{enumerate}
  \item It should be stressed again that no optimization
  solver is necessary.
\item   A similar statement, omitted, can be given for the RD formulations.
\item 
  It can be desirable to satisfy entropy (in-) equalities for multiple entropies.
  Based on Remark~\ref{re:Tadmor_restriction}, the numerical surface fluxes
  $\fnumj$ should be EC for both entropies. However, this is in
  general not possible in case of DG schemes and following Tadmor's framework.
  Indeed, for a scalar conservation law and a fixed entropy,
  the EC numerical flux is uniquely determined as
  $\fnumj = {\jump{\psi^j}}/{\jump{w}}$.
 In the case of CG, it seems possible since no constraints are formulated.
 However, a decrease of accuracy may be expected also through the results 
 of Osher for E-schemes \cite{osher1984riemann}, which satisfy an entropy
 inequality for every convex entropy but are at most first order accurate.
 These investigations are left for future research.
 \end{enumerate}
\end{remark}

\subsection{Kinetic Energy for the Euler Equations}
\label{subsec:kinetic-energy}

Consider the compressible Euler equations in two space dimensions (the extension
to three space dimensions is straightforward)
\begin{equation}
\label{eq:Euler}
\begin{aligned}
  \partial_t
  \underbrace{
  \begin{pmatrix}
    \rho
    \\
    \rho v_x
    \\
    \rho v_y
    \\
    \rho e
  \end{pmatrix}
  }_{= u}
  + \,\partial_x
  \underbrace{
  \begin{pmatrix}
    \rho v_x
    \\
    \rho v_x^2 + p
    \\
    \rho v_x v_y
    \\
    (\rho e + p) v_x
  \end{pmatrix}
  }_{= f^x(u)}
  + \,\partial_y
  \underbrace{
  \begin{pmatrix}
    \rho v_y
    \\
    \rho v_x v_y
    \\
    \rho v_y^2 + p
    \\
    (\rho e + p) v_y
  \end{pmatrix}
  }_{= f^y(u)}
  =
  0,
\end{aligned}
\end{equation}
where $\rho$ is the density of the gas, $v = (v_x,v_y)$ its speed, $\rho v$ the
momentum, $e$ the specific total energy, and $p$ the pressure. The total energy
$\rho e$ can be decomposed into the internal energy $\rho \epsilon$ and the kinetic
energy $\Ekin = \frac{1}{2} \rho v^2$, i.e. $\rho e = \rho \epsilon + \frac{1}{2}
\rho v^2$. For a perfect gas,
%\begin{equation*}
%\label{eq:Euler-p}
$  p
  = (\gamma-1) \rho \epsilon
  = (\gamma-1) \big( \rho e - \frac{1}{2} \rho v^2 \big),
$ %\end{equation*}
where $\gamma$ is the ratio of specific heats. For air, $\gamma = 1.4$ will be
used, unless stated otherwise.
The kinetic energy satisfies  
\begin{equation}
\label{eq:Ekin}
  - \partial_t \Ekin
  =
  - \frac{1}{2} v^2 \partial_t \rho
  + v \cdot \partial_t (\rho v)
  =
  \partial_j \left( \frac{1}{2} \rho v^2 v_j + p v_j \right)
  - p \partial_j v_j.
\end{equation}
A numerical flux $\fnumj$ is \emph{kinetic energy preserving} (KEP)
\cite{Jameson,ranocha2020entropy,ranocha2020preventing}, if
\begin{equation}
\label{eq:fnumj-KEP}
  \fnumj_{\rho v_i} = \armean{v_i} \fnumj_{\rho} + \armean{p} \delta_{ij}.
\end{equation}
The corresponding numerical "flux" for the kinetic energy (approximating the conservative
part of the kinetic energy equation) is
\begin{equation*}
  \Fnumj(u^-, u^+)
  =
  \frac{1}{2} v_i^- v_i^+ \fnumj_\rho(u^-, u^+)
  + \frac{p^+ v_j^- + p^- v_j^+}{2}.
\end{equation*}
Using $w = \partial_u \Ekin(u)$, a KEP semidiscretization
mimicking \eqref{eq:Ekin} has to satisfy (cf.\ \cite[Section~7.4]{ranocha2018thesis})
\begin{equation}
\label{eq:operator-Ekin-target}
  1^T M \partial_t \Ekin
  =
  p^T M D_j v_j
  - 1^T R^T B N_j \bigl( \Fnumj - \Snumj \bigr),
\end{equation}
where the discretisation $\Snumj$ of the nonconservative term $- p \partial_j v_j$
at the surface between two elements is given as
\begin{equation*}
  \Snumj(u^-, u^+)
  =
  p^- \frac{v_j^+ - v_j^-}{2}.
\end{equation*}
Here, the argument $u^-$ comes from the interior of an element and the argument
$u^+$ from the neighboring element.

As mentioned in Remark~\ref{rem:singh2019kinetic}, correction terms have been
used to obtain KEP schemes in \cite{singh2019kinetic}.
In contrast to the approach presented in the following, a certain split form of
the Euler equations has been used there instead of a central discretization and Abgrall's
correction terms are used to remove some interpolation errors at the boundaries.

Using the same approach as in Section~\ref{subsec:DG} results in semidiscretizations
\eqref{eq:operator-semidiscretisation-corrected}, where the correction term $\r_i$
has to be chosen such that local conservation \eqref{eq:operator-conservation-target}
and kinetic energy preservation \eqref{eq:operator-Ekin-target} are satisfied.

\begin{remark}
  Similarly to Remark~\ref{re:Tadmor_restriction}, the constraints
  \eqref{eq:operator-conservation-target} and \eqref{eq:operator-Ekin-target}
  do not contradict each other in a finite volume setting if KEP
  fluxes $\fnumj$ are used.
\end{remark}

The correction term for the kinetic energy is
\begin{equation}
\label{eq:operator-Ekin-correction}
\begin{gathered}
  \r_i = \alpha \left(
    w_i - \frac{1^T M w_i}{1^T M 1} 1
  \right),
  \quad
  \alpha = \frac{\mathcal{E}}{w_k^T M w_k
                        - \frac{(1^T M w_k) (1^T M w_k)}{1^T M 1}},
  \\
  \mathcal{E} = p^T M D_j v_j - 1^T R^T B N_j \bigl( \Fnumj - \Snumj \bigr)
  - w_k^T M \, \VOL_k - w_k^T M \, \SURF_k.
\end{gathered}
\end{equation}
\begin{proposition}
%\label{pro:optimization-operator-Ekin}
  If the constraints \eqref{eq:operator-conservation-target} and
  \eqref{eq:operator-Ekin-target} do not contradict each other,
  the correction term $\r$ \eqref{eq:operator-Ekin-correction} is the unique
  optimal correction of \eqref{eq:operator-semidiscretisation-corrected}, measured
  in the discrete norm induced by $M$, such that \eqref{eq:operator-conservation-target}
  and \eqref{eq:operator-Ekin-target} are satisfied.
\end{proposition}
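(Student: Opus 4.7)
The plan is to imitate the proof of Theorem~\ref{thm:optimisation-operator-entropy} almost verbatim. The optimisation problem has exactly the same structural form: a quadratic objective $\frac{1}{2} \norm{\r}_M^2$ with positive definite Hessian $\I_m \otimes M$, the $m$ linear conservation constraints $1^T M \r_i = 0$, and one further linear constraint $w_i^T M \r_i = \mathcal{E}$. The only differences are that $w_i$ is now interpreted as $\partial_{u_i}\Ekin$ rather than $\partial_{u_i} U$, and that $\mathcal{E}$ is given by the kinetic-energy residual in \eqref{eq:operator-Ekin-correction} instead of the entropy residual in \eqref{eq:operator-entropy-correction}. Neither substitution affects the linear-algebraic skeleton of the argument.

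Concretely, I would rewrite the problem as
\begin{equation*}
  \min_{\r} \frac{1}{2} \r^T (\I_m \otimes M) \r
  \quad \st A \r = b,
  \quad
  A = \begin{pmatrix} \I_m \otimes (1^T M) \\ w^T (\I_m \otimes M) \end{pmatrix},
  \quad
  b = \begin{pmatrix} 0 \\ \mathcal{E} \end{pmatrix},
\end{equation*}
and invoke the KKT system from \cite[Section~16.1]{nocedal1999numerical} exactly as in the proof of Theorem~\ref{thm:optimisation-operator-entropy}. Positive definiteness of $\I_m \otimes M$, together with the non-contradiction hypothesis on the constraints (which implies that $A$ has full row rank after accounting for feasibility), yields existence and uniqueness of the minimiser. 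The first-order conditions then force $M \r_i \in \operatorname{span}\{M 1, M w_i\}$ with the coefficient in front of $M w_i$ independent of $i$; substituting this ansatz into the $m+1$ constraints fixes the coefficient of $M 1$ (via $1^T M \r_i = 0$) and the common scalar $\alpha$ (via $w_i^T M \r_i = \mathcal{E}$), reproducing formula \eqref{eq:operator-Ekin-correction} after one short calculation.

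The only step that requires any thought beyond Theorem~\ref{thm:optimisation-operator-entropy} is checking that the non-contradiction hypothesis is actually reasonable in the kinetic-energy setting; this will be the main point to address. Here I would lean on the preceding remark and on Remark~\ref{rem:EC-fluxes-necessary}: when the local state is such that the $w_i$ are proportional to $1$, the scheme reduces to a finite-volume update, and the assumption that the numerical surface fluxes $\fnumj$ satisfy the kinetic-energy-preservation relation \eqref{eq:fnumj-KEP} makes $\mathcal{E} = 0$ in that regime, so that the vanishing of the denominator of $\alpha$ is matched by a vanishing numerator and $\r = 0$ is admissible. Apart from this compatibility check, every other step is routine linear algebra inherited from Theorem~\ref{thm:optimisation-operator-entropy}.
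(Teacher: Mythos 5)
Your proposal is correct and matches the paper's intent exactly: the paper gives no separate proof of Proposition~\ref{pro:optimisation-operator-Ekin}, stating only that it follows ``similar to Theorem~\ref{thm:optimisation-operator-entropy}'', and your argument is precisely that proof transplanted --- same KKT reformulation, same span condition on $M\r_i$, same verification of the explicit formula. Your additional compatibility check via the kinetic-energy analogue of Remark~\ref{rem:EC-fluxes-necessary} is consistent with the remark the paper places immediately before the proposition.
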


\begin{remark}
$ $
\begin{enumerate}
\item 
Again,  \eqref{eq:operator-Ekin-correction}
  is an explicit solution, no optimization solver is necessary.
%\end{remark}
\item
%\begin{remark}
  Using Theorem~\ref{thm:optimization-operator-2-entropies}, combined correction
  terms for the entropy and kinetic energy can be created for the Euler equations.
  The corresponding entropy is chosen as
 % \begin{equation*}
 $   U = - \tfrac{\rho s}{\gamma - 1}$, with 
 %   \quad
$    s = \log\bigl(p / \rho^\gamma \bigr)$, and potentials and entropy variables are defined accordingly.
%  \end{equation*}
%  the flux potentials are $\psi^j = \rho v_j$, and the entropy variables are
%  \begin{equation*}
%  %\label{eq:Euler-w}
%    w = U'(u)
%    =
%    \left(
%      \frac{\gamma}{\gamma-1} - \frac{s}{\gamma-1} - \frac{\rho v^2}{2 p},
%      \frac{\rho v_x}{p},
%      \frac{\rho v_y}{p},
%      -\frac{\rho}{p}
%    \right)^T.
%  \end{equation*}
  \end{enumerate}
\end{remark}

\section{Numerical Examples}
\label{sec:numerical-examples}

In this section, some numerical examples using the correction terms
are presented for several types of schemes.
We concentrate on different kinds of schemes:
the continuous Galerkin schemes \cite{abgrall2017high, abgrall2019analysis},
the $\emph{psi}$-scheme \cite{abgrall2019high}, and the DG method
\cite{chen2017entropy}. The first two methods use the RD framework and the third one
is described and implemented  in the operator formulation from subsection \ref{subsec:DG}.
We use RK-schemes or the DeC approach to make the numerical methods fully discrete.
We would like to emphasize that this paper focuses on the analysis of semidiscrete EC/D schemes and the entropy correction terms. Finally,  some simulations are presented using FEC/D-methods and compared to the SEC/D setting.
Before, starting it should be marked that EC/D is not the holy grail in numerical analysis,
meaning that a bad baseline scheme will always behave inadequately even if one forces the scheme to be EC/D. An example for this can be found in the Appendix~\ref{eq:advection}. Therefore, one should already start with a scheme which  at least performs for simple problems in an adequate way.

\subsection{Two-Dimensional Scalar Equations}

Here, the focus lies on a comparison between the correction terms
\eqref{eq:correction} and \eqref{eq:operator-entropy-correction-RD}
using different weightings. In \eqref{eq:correction}, the identity matrix is used
whereas \eqref{eq:operator-entropy-correction-RD} applies the mass matrix $M$.
A first comparison is given using these two approaches. It is clear that the difference should be  quite small. However, to have a closer look on the
behavior, two examples are demonstrated.
Here, for comparison  a  pure continuous Galerkin scheme in the RD
setting \cite{abgrall2019analysis} is considered.

\subsubsection*{Rotation}

The first problem is a linear rotation equation in two space dimensions
given by
\begin{equation}\label{eq:linear_advec_two}
\begin{aligned}
      \partial_t u(t,x,y) + \partial_x(2\pi y u(t,x,y)) + \partial_y(2\pi x u(t,x,y)) =0&,
      && (x,y) \in D, t\in (0, 1), \\
     u(0,x,y)=u_0(x,y)=\exp\bigl( -40(x^2+(y-0.5)^2) \bigr)&,&& (x,y)\in D,
\end{aligned}
\end{equation}
where $D$ is the unit disk in $\R^2$.
For the boundary, outflow conditions are considered. For time integration,
the fourth order strong stability preserving scheme SSPRK(5,4) is used  in the RD framework with 
CFL number $0.4$.  The correction  is  done at each step in the semidiscrete setting. As in \cite{abgrall2019analysis, abgrall2019analysis_2}  a slight decrease of the order can be
recognized that is not due to the usage of the entropy correction terms and known in continuous FE discretization by coupling space and time.
A pure continuous Galerkin scheme of fourth order is used and Bernstein polynomials
are applied as basis functions, see \cite{abgrall2019high}.
In this test, a small bump located around $(0,0.5)$
is moving around in a circle. The rotation is completed at $t = 1$.
The mesh contains 3582 triangular elements.  In Table~\ref{entrokey},
the change in the energy is given after half rotation and after one full rotation.

\begin{table}[!ht]
\centering
  \caption{Total energy  change  $\int_\Omega U^2_{corr}(t)-\int_\Omega U^2_0$ of numerical solutions using a continuous Galerkin 
           scheme for the linear test problem \eqref{eq:linear_advec_two}.}
  \label{entrokey}
  \begin{tabular*}{\linewidth}{@{\extracolsep{\fill}}*3c@{}}
    \toprule
    Time & Correction \eqref{eq:correction} & Correction \eqref{eq:operator-entropy-correction-RD}\\
    \midrule
%     0.00    &       0      & 0 \\
    %0.25   &   2.2115671337273099 $\cdot 10^{-20}  $  & $2.2115671340405919\cdot 10^{-20}$\\
    0.50   &    $2.2904450548220892\cdot 10^{-19}  $  & $2.2904450546226423\cdot10^{-19} $ \\
%     0.75      &    $1.1227679980638192\cdot 10^{-9}  $  & $1.1227679980608775\cdot 10^{-9} $\\
    1.00    &    $1.2090807569861689\cdot 10^{-17}$    & $1.2090807582866649\cdot 10^{-17}$\\
    \bottomrule
  \end{tabular*}
\end{table}

The differences in Table~\ref{entrokey} are very small and the correction terms
lead in both cases to good results. % \one{If the number of elements is increased, all the terms in Table~\ref{entrokey} will tend to machine accuracy which demonstrates well entropy conservation for the considered schemes using either of the corrections.}
Finally, the errors of the numerical solutions are nearly identical
for both correction terms.
Using \eqref{eq:correction}, the $L_{\infty}$ error is given by $ 1.5323772081379895\cdot 10^{-4}$.
In the other case, one obtains $  1.5323772103428853\cdot 10^{-4}$.
They intersect up to the power $10^{-12}$. 
If we increase the number of DOFs, the results are getting similar and
indistinguishable for this test case, in accordance with expectations
for a smooth linear problem.
Next, a nonlinear equation is considered.

\subsubsection*{Burgers' Type of Equation}
The problem is given by
\begin{equation}\label{eq:burgers_equation_like}
\begin{aligned}
      \partial_t u(t,x,y) +\partial_x(\cos u(t,x,y)) + \partial_y u(t,x,y)=0&,
      && (x,y) \in D, t\in (0, 0.2) \\
     u(0,x,y)=u_0(x,y)=\exp\bigl( -40(x^2+y^2) \bigr)&,&& (x,y)\in D,
\end{aligned}
\end{equation}
where $D$ is the unit disk in $\R^2$.
Again, outflow boundary conditions are considered and time integration is done
via SSPRK(3,3) with CFL number 0.1. A pure continuous Galerkin scheme of third order
with Bernstein polynomials is used in space. In this
test case, a small bump located around zero  moves up to the left and a shock
will appear after a finite time.
However, for this study, the time is considered before the shock appears.
The choice of $\cos$ in \eqref{eq:burgers_equation_like} is done on purpose
to guarantee that the integration with a quadrature rule will never be exact and in addition 
the flux is not convex.
%The influence of the different correction terms is not visible in the solution
%in Figure~\ref{fig:Solution}, where the solution at $t=0.2$ (right before the
%shock appears) is visualised.

%\begin{figure}
%\centering
%  \includegraphics[width=0.5\textwidth]{Figures/RD_Solution_Galerkin.png}
%  \caption{Bump at the time $t = 0.2$ calculated with 7052 triangles and a continuous
%           Galerkin scheme.}
%  \label{fig:Solution}
%\end{figure}
%
%Here, the correction \eqref{eq:operator-entropy-correction-RD} is applied.
 For the
square entropy  $U=u^2 / 2$, the differences $\int_\Omega U^2_{corr}(0.2)-\int_\Omega U^2_0$
of the entropies with the different corrections are
\begin{itemize}
  \item  $1.6819336368255455\cdot 10^{-15}$ for the correction \eqref{eq:correction} and
  \item  $1.6822445661098047\cdot 10^{-15}$ for the correction \eqref{eq:operator-entropy-correction-RD}
\end{itemize}
at $t = 0.2$.
The difference between the influence of the correction terms is also very small
for this test case  and will further decrease if the number of DOFs are increased. 
For smooth solution, no significant differences between the
two correction terms has been seen so far for different types of schemes and
other test cases not shown here. However, the advantages of one of the correction
terms compared to the other one cannot be excluded.

\subsection{Euler Equations}
This subsection contains two parts. First, the entropy correction term will be extended to the compressible Euler equations and compared. In the second part, a
first comparison between the flux-splitting approach and the application of corrections terms in classical nodal schemes is made. Here, everything is considered in the DG setting. Since
there exists a close connection between  FD  and DG using SBP operators  \cite{gassner2013skew}, the same holds true for SBP-FD schemes and can be found in the Appendix \ref{subsect:SBP_SAT_FD}. Finally, the schemes are considered on a tensor
structured grid and fulfill the SBP property, but triangular grids are also possible
\cite{chen2017entropy}. Nevertheless, we would like to stress that these schemes
fulfill several structural properties (SBP property) and are developed specifically
for experiments like these.

\subsubsection{Two-Dimensional  Sod and Shu-Osher Problem}
Here, we apply DeC together with either CG or the $\emph{psi}$-scheme using
Bernstein polynomials with the same order of accuracy, resulting in an
explicit space-time FE method.
In the first test, the focus will be on the classical two-dimensional Sod problem. 
The initial condition contains a jump in density and pressure,
\begin{equation*}
 (\rho_0, v_{x,0},v_{y,0}, p_0) =\begin{cases}
 (1,0,0,1),  \quad &0\leq r \leq 0.5,\\
 (0.125,0,0,0.1),  \quad &0.5<r \leq 1.\\
 \end{cases}
\end{equation*}
For the first simulation in Figure \ref{fig:initit_Sod}, the CFL number is set to $0.25$,  the test is run until $t= 0.25$,
and the classical correction term in the semidiscrete setting is applied together with a continuous CG  with jump stabilization, i.e. the correction is only used for  the space residual $\Phi_{\sigma,x}^K$ in \eqref{oneline}. Outflow boundary conditions are considered.
Without correction term, the CG scheme is breaking down at $t \approx 0.16$,
while it runs until the end when the correction term is used.

\begin{figure}[!ht]
\centering
    \includegraphics[width=0.32\textwidth]{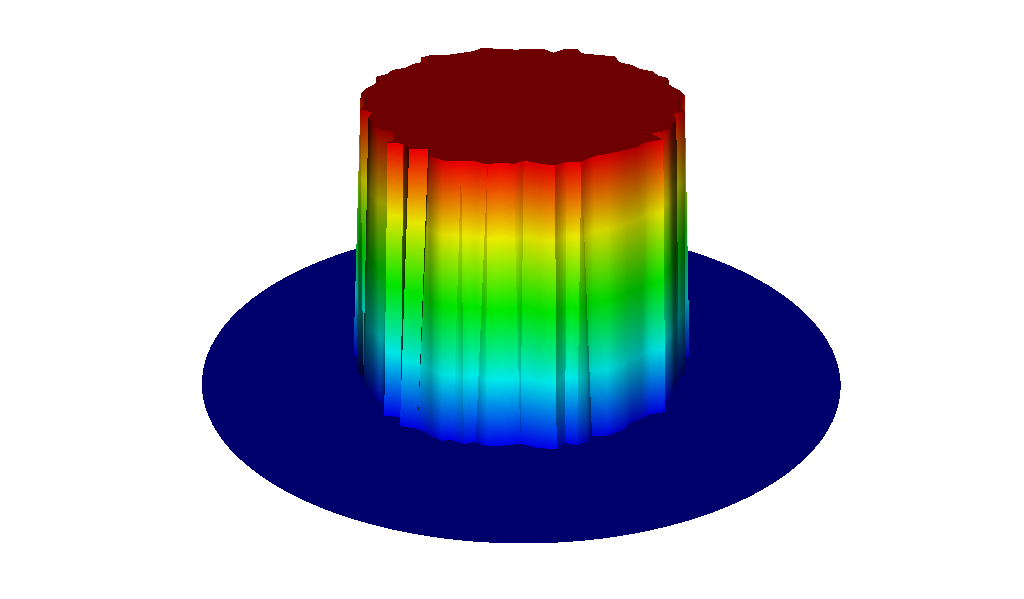}
        \includegraphics[width=0.32\textwidth]{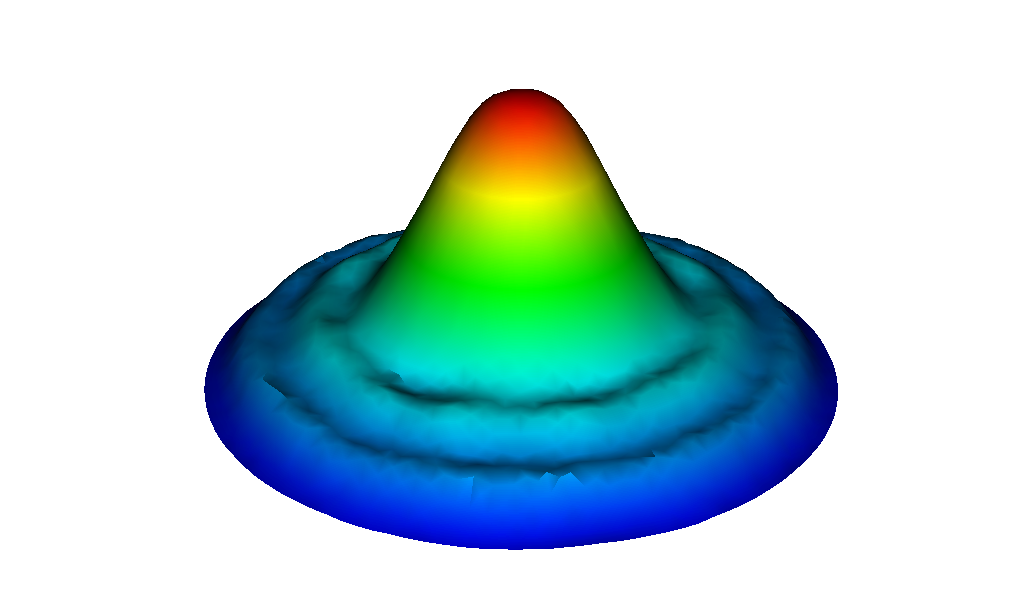}
    \includegraphics[width=0.32\textwidth]{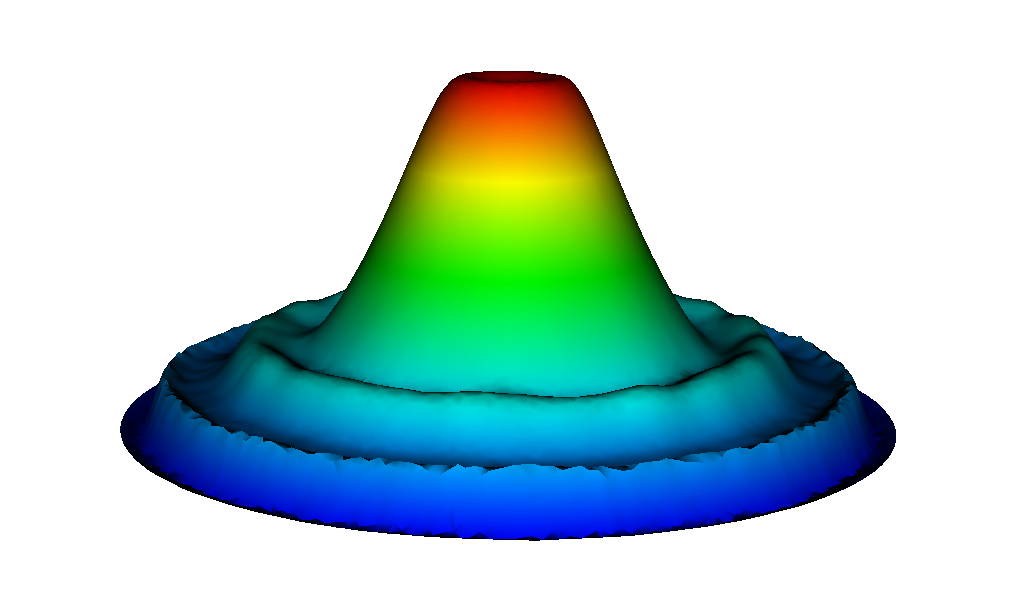}
  \caption{$t=0.25$, DeC(2,2), B1, residual correction, $\rho$-plot, 3576/13548 elements.}
 \label{fig:initit_Sod}
\end{figure}

In Figure~\ref{fig:both_correction_Sod}, both correction terms are applied. Without zooming the results are  indistinguishable. In the right picture, the bold line is (below) is the numerical solution using the classical correction term  \eqref{eq:operator-entropy-correction-RD} and the thin line is the result using the correction \eqref{eq:correction}.
This examples demonstrates well the improvement one obtains using entropy correction terms since the not corrected scheme is breaking down. 

\begin{figure}[!ht]
\centering
    \includegraphics[width=0.49\textwidth]{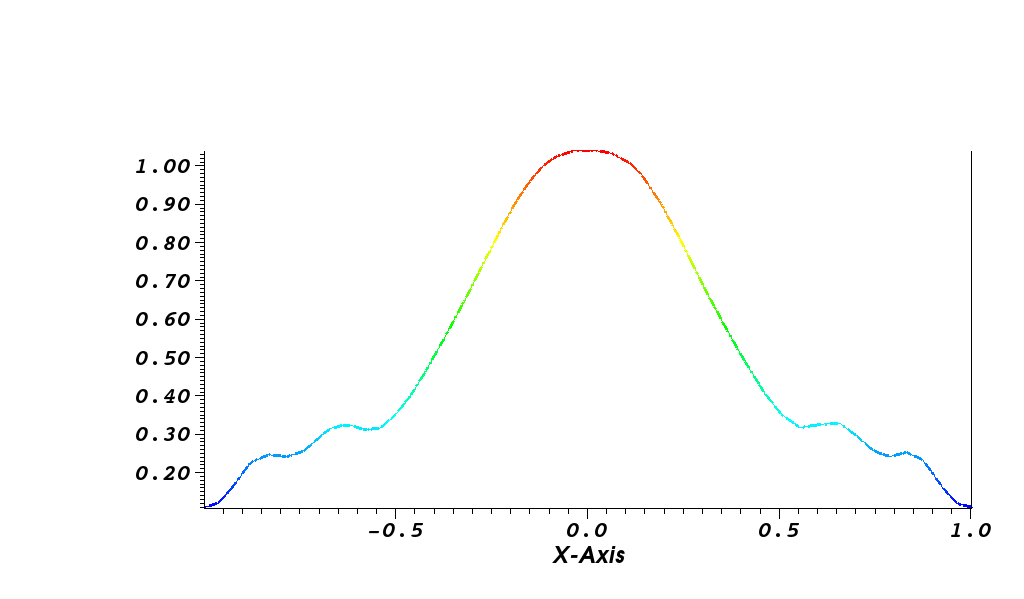}
    \includegraphics[width=0.49\textwidth]{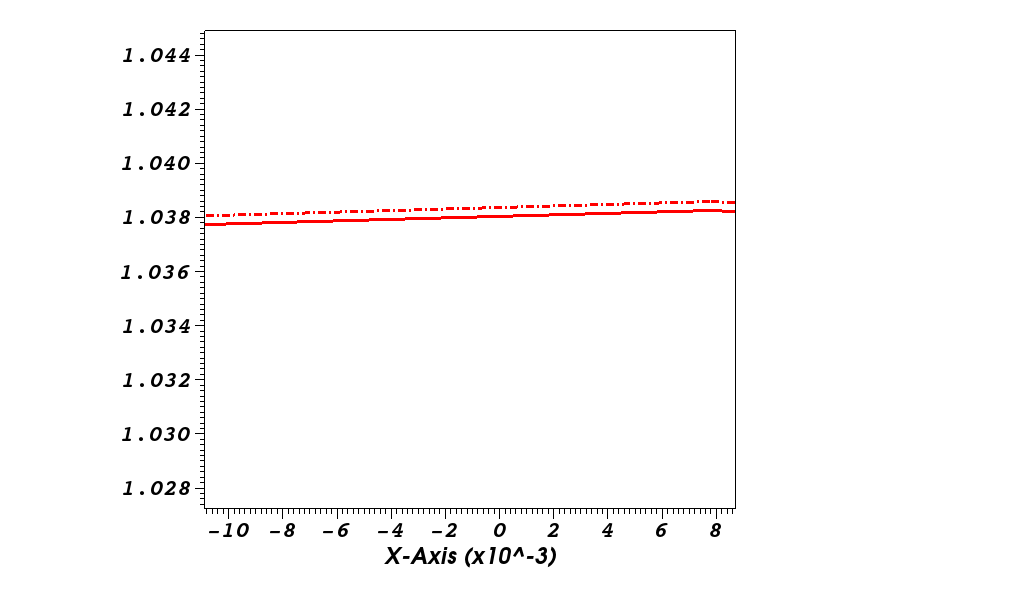}
  \caption{$t=0.25$, DeC(2,2), B1,  both correction terms,  $\rho$-plot, 3576 elements.}
  \label{fig:both_correction_Sod}
\end{figure}

Up to this point, only SEC/D schemes have been presented.
In Figure~\ref{fig:fully_correction_Sod},  the same test is considered with wall boundary conditions and the correction term is applied to the fully discrete update step, i.e.\ the complete bracket in \eqref{oneline} is corrected, resulting in  fully discrete EC
schemes. As baseline schemes the continuous Galerkin (straight line) and $\emph{psi}$ schemes (dotted) have been used.
One can recognize that the more dissipative character of classical $\emph{psi}$ scheme  has already some positive effect on the behavior of the approximation, although the numerical solutions are EC by construction. This can seen by zooming in since the plateaus are no longer flat using the Galerkin scheme.  The reason for this lies in the imposed wall boundary conditions and a further
investigation of this behavior will be addressed in future work.

\begin{figure}[!ht]
\centering
    \includegraphics[width=0.55\textwidth]{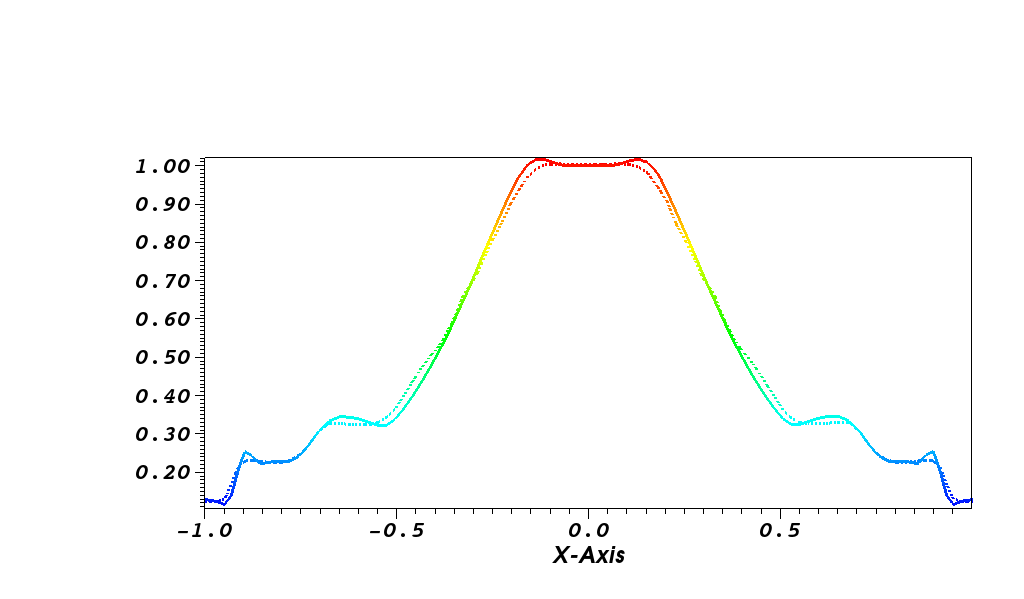}
    \includegraphics[width=0.4\textwidth]{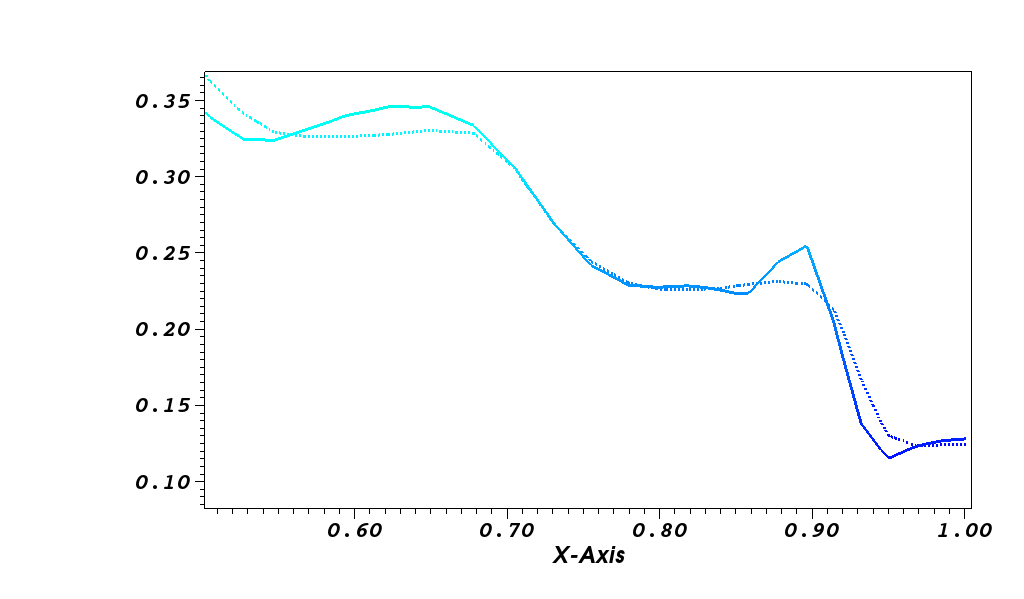}
  \caption{$t=0.25$, DeC(2,2), B1,  residual correction,  $\rho$-plot, 8576 elements.}
  \label{fig:fully_correction_Sod}
\end{figure}

The semidiscrete (dotted) and the fully discrete (solid) Galerkin schemes
are compared in Figure~\ref{fig:fully_semi_correction_Sod}.
Small differences can be recognized, especially around the shocks. However, further investigations and a comparison with the relaxation approach will be part of future work.
The correction term \eqref{eq:operator-entropy-correction-RD} is mainly applied. However, analogous results in terms of the quantitative behavior are obtained if the correction term \eqref{eq:correction} would have been used.
\begin{figure}[!ht]
\centering
    \includegraphics[width=0.54\textwidth]{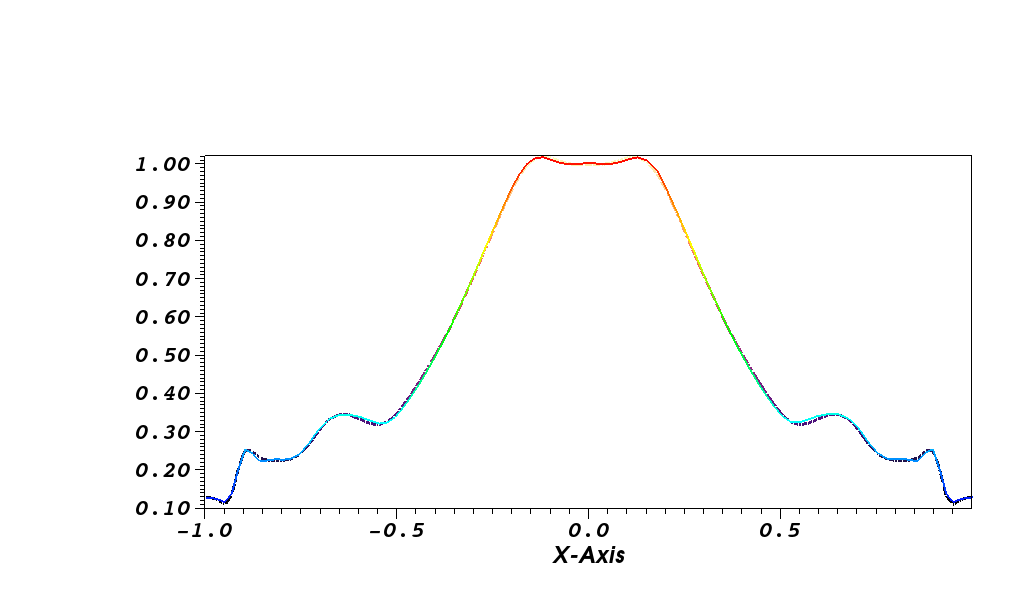}
    \includegraphics[width=0.44\textwidth]{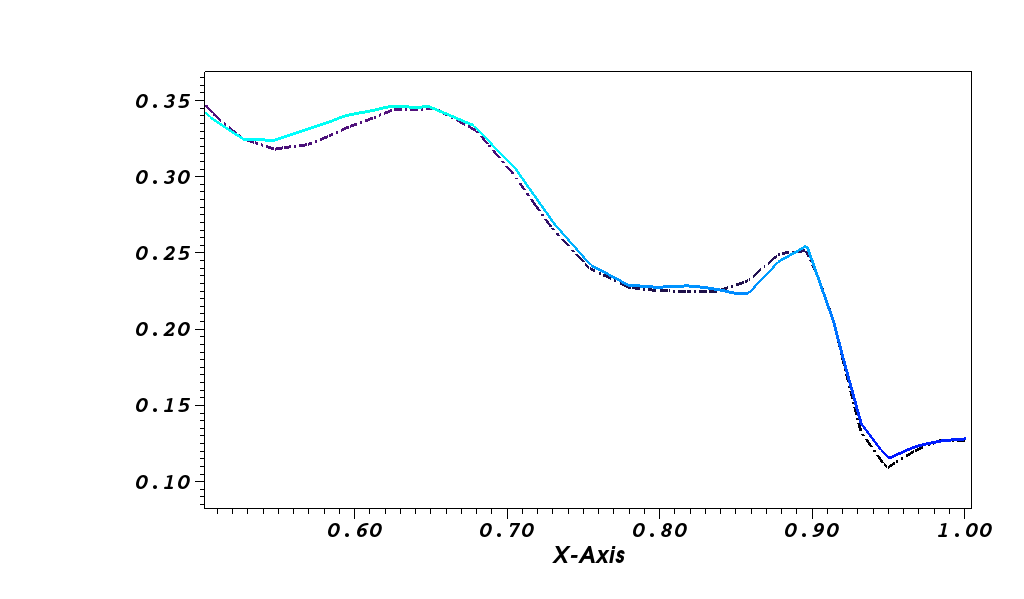}
  \caption{$t=0.25$, DeC(2,2), B1,  residual correction,  $\rho$-plot, 8576 elements.}
  \label{fig:fully_semi_correction_Sod}
\end{figure}
Finally, extensions to higher polynomial degrees are also possible if one can guarantee that both the pressure as well as the density remain positive. This is important for the application of the entropy correction term, not only because of physical reasons but also because of the switch between conservative and entropy variables.
In Figure~\ref{fig:B3}, the corrected Galerkin methods are applied and positivity of density and pressure is ensured at all DOFs through the MOOD procedure \cite{bacigaluppi2019posteriori}. However, other limiting strategies can be used as well, e.g.\ \cite{kuzmin2020entropy}.
\begin{figure}[!ht]
\centering
    \includegraphics[width=0.4\textwidth]{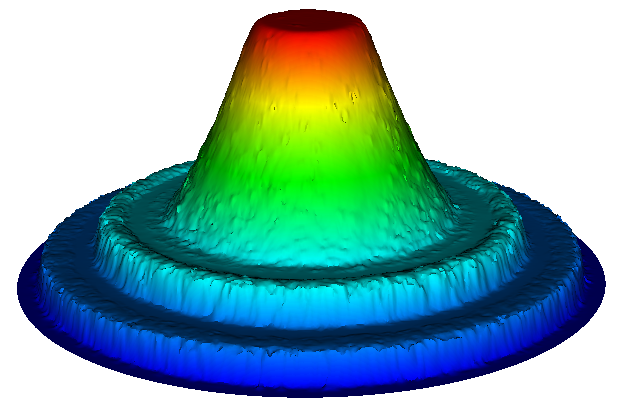}
    \includegraphics[width=0.44\textwidth]{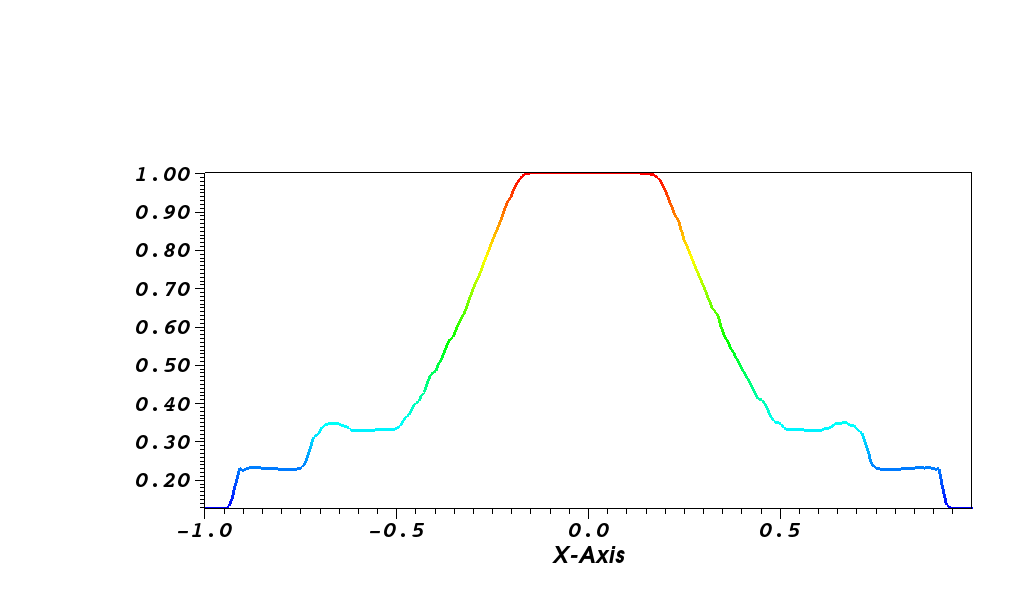}
  \caption{  $t=0.25$, DeC(4,4), B3,  residual correction,  $\rho$-plot, 13548 elements }
  \label{fig:B3}
\end{figure}
A second test is considered for completeness. Here, the classical 1D Shu-Osher test is extended to two dimension  with radial speeds and initial conditions
 \begin{equation*}
 (\rho_0, v_{x,0},v_{y,0}, p_0)
 =
 \begin{cases}
 (3.857143,\sqrt{2.629\rho_0}\frac{x}{r},\sqrt{2.629\rho_0}\frac{y}{r},10.\overline{3}),  \quad &0\leq r \leq 1,\\
 (1+0.5\sin (5r),0,0,1),  \quad &1<r <4, \\
 (1+0.5\sin (20),0,0,1),  \quad &4\leq r. \\
 \end{cases}
 \end{equation*}
 Figure \ref{Fig:Shu} shows the initial conditions, an intermediate result after 150 steps, and the final result
 at $t=1.8$ (373 steps) using the FEC Galerkin method. The SEC Galerkin method is nearly indistinguishable from this result due to the properties of the DeC approach \cite{abgrall2017high, abgrall2021relaxation}
 and the usage of high order quadrature rules.

\begin{figure}[!ht]
\centering
 \includegraphics[width=0.3\textwidth]{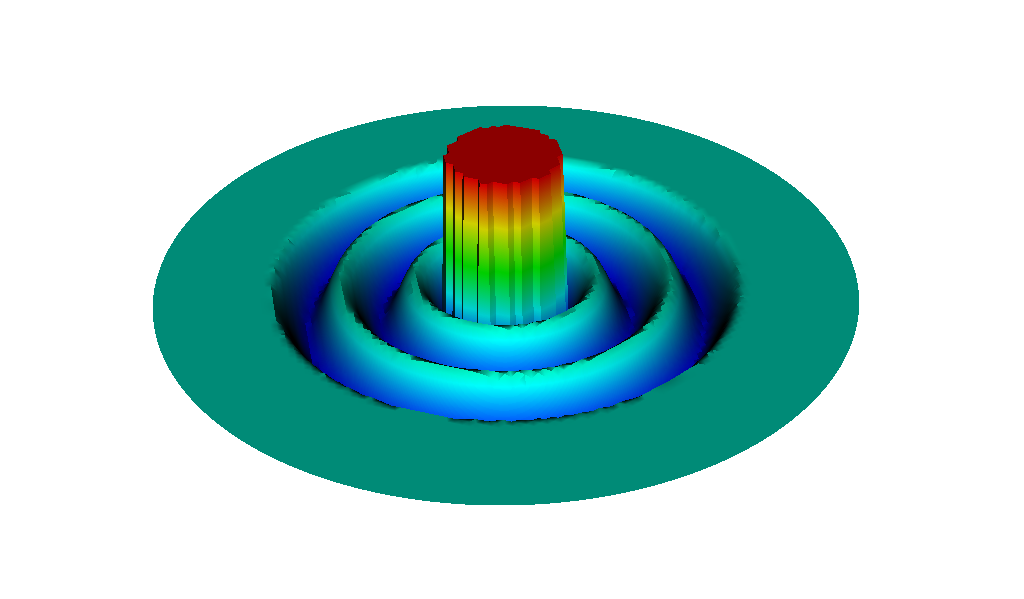}
    \includegraphics[width=0.32\textwidth]{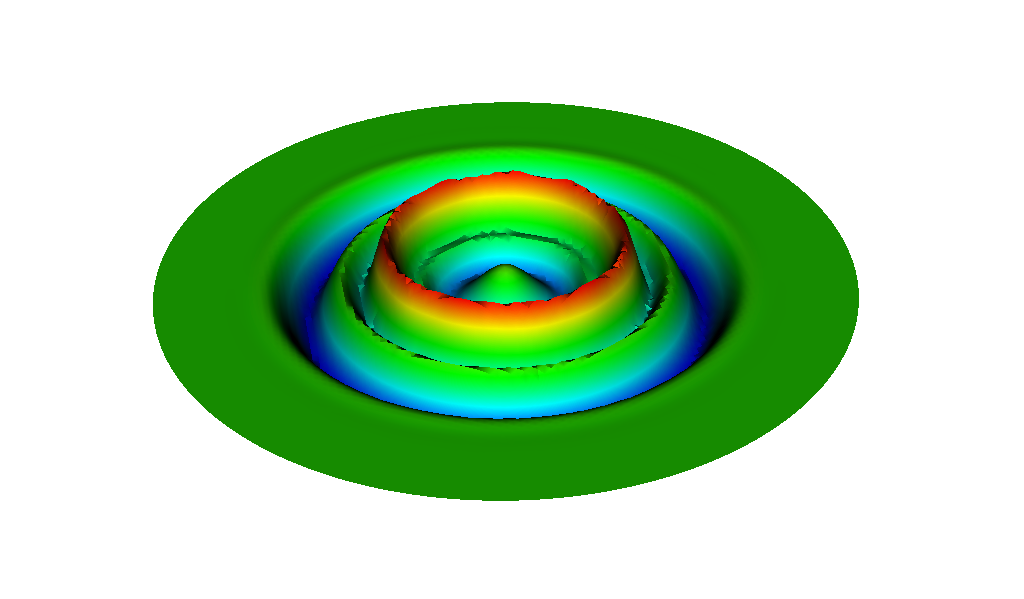}
    \includegraphics[width=0.32\textwidth]{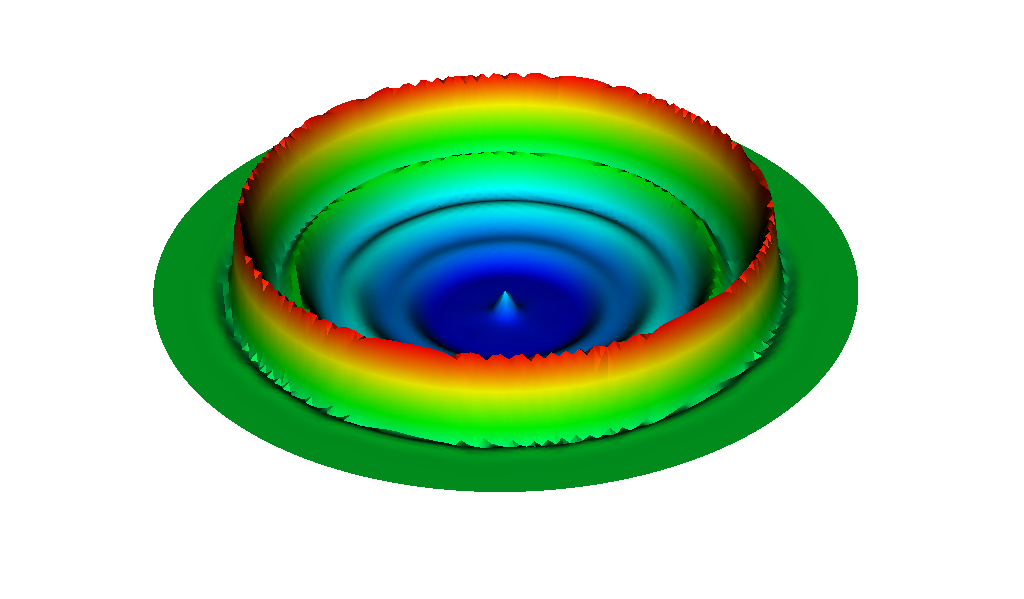}
  \caption{ Initial condition, (150 steps) and $t=1.8$ (373 steps), DeC(2,2), B1, classical correction, $\rho$-plot.}
  \label{Fig:Shu}
\end{figure}

\subsubsection{SBP-SAT-DG Setting}\label{subsection_SBP_DG}

\begin{figure}
\centering
  \includegraphics[width=\textwidth]{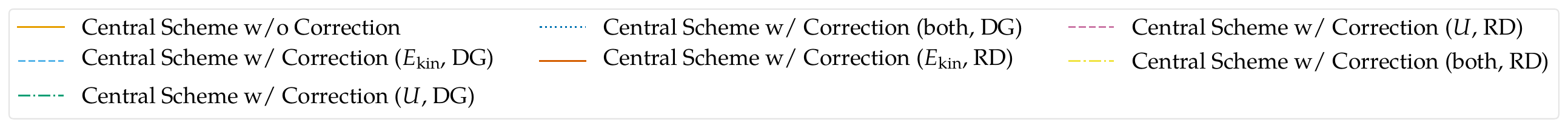}
  \\
  \begin{subfigure}{0.49\textwidth}
    \centering
    \includegraphics[width=\textwidth]{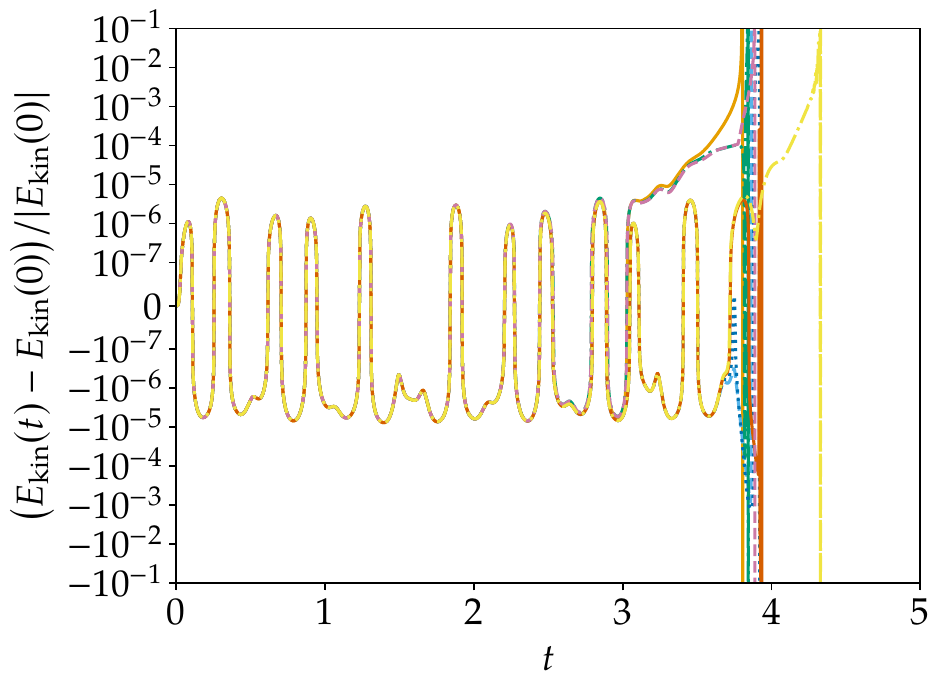}
    \caption{Kinetic Energy.}
  \end{subfigure}%
  \begin{subfigure}{0.49\textwidth}
    \centering
    \includegraphics[width=\textwidth]{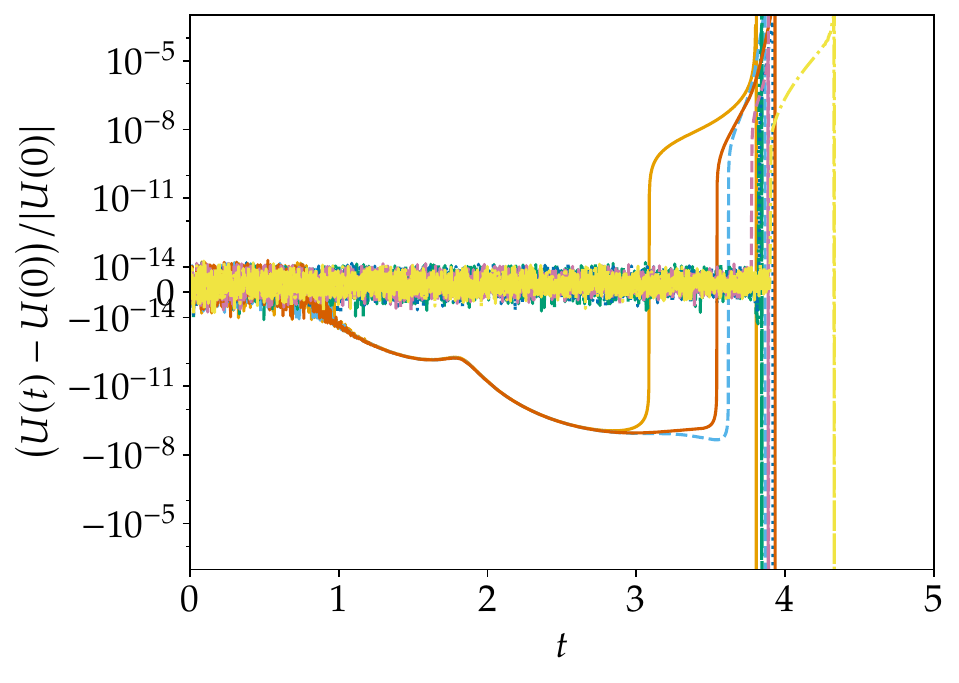}
    \caption{Entropy.}
  \end{subfigure}%
  \caption{Relative kinetic energy $\Ekin$ and entropy $U$ of numerical solutions
           of the compressible Euler equations with a Taylor-Green %vortex initial %\cite[Eq.~25.4.14]{abramowitz1972handbook}
        %   condition. The nodal discontinuous Galerkin schemes use polynomials
       %    of degree $p=5$ on Lobatto Legendre nodes, a classical central scheme
     %      in the interior of each element, and the numerical fluxes of
     %      \cite[Theorem~7.8]{ranocha2018thesis} between elements.
     }
  \label{fig:Euler_DG_TGV}
\end{figure}

In the second part,  a first comparison
between the flux differencing approach and the application of
correction terms in classical nodal schemes is made.
We consider the compressible Euler equations
 together with Taylor-Green vortex initial conditions given by
\begin{equation*}
\begin{aligned}
  \rho(0,x,y) &= 1,
  &
  v_x(0,x,y) &= \sin(x) \cos(y),
  \\
  v_y(0,x,y) &= -\cos(x) \sin(y),
  &
  p(0,x,y) &= \frac{100}{\gamma} + \frac{ \cos(2x) + \cos(2y) }{4},
\end{aligned}
\end{equation*}
for $(x,y) \in [0,2\pi]^2$ and periodic boundary conditions.
Then, a nodal DG scheme on Lobatto Legendre nodes
and the associated Lobatto Legendre quadrature
\cite[Eq.~25.4.32]{abramowitz1972handbook} for the mass matrix with
polynomial degree $p = 5$ and $16$ tensor product quadrilateral elements per
coordinate direction is  used. The numerical flux at the boundaries is
the one of \cite[Theorem~7.8]{ranocha2018thesis}
and the classical central scheme is used in the interior of each element.
The numerical solutions have been integrated in time with the fourth order, ten-stage,
SSPRK method of \cite{ketcheson2008highly} and
a constant time step $\Delta t = \frac{\Delta x}{10} \frac{1}{p^2 + 1}$, where
$\Delta x$ is the width of one element.
Using the central scheme in the interior of each element results
in a blow-up  at
$t \approx 3.8$, cf.\ Figure \ref{fig:Euler_DG_TGV}.
Applying a correction for $\Ekin$ removes the increase of the
kinetic energy before the blow-up and applying a correction for $U$ yields a nearly
constant entropy (before the blow-up).
As before, there is no big difference between the basic choices of the correction terms
%There does not seem to be a big difference between the basic choices
%for the correction terms 
\eqref{eq:correction} and \eqref{eq:operator-entropy-correction}.
The different weighting does not seem to be crucial for these unstable calculations
here. The only minor exception is the correction term for both $\Ekin$ and $U$
with the  weighting  \eqref{eq:correction}. It blows up slightly later at
$t \approx 4.3$.
The computations using flux differencing schemes in the interior of each element
%analogously to the finite difference schemes
 remain stable and do not blow up until $t\approx 35$ is reached, in contrast to the central schemes with correction terms which breaks before.
The main purpose of the flux-differencing approach is to split the volume term in the discretization
(resulting in a split or skew-symmetric formulation). 
Other split formulations for the volume term can be found in the literature, cf.\ \cite{ranocha2018thesis} and references therein. Another one is based on the flux of Ducros et al. \cite{ducros2000high},
resulting in a DG scheme which is neither entropy stable nor KEP.
%Now, if one considers a flux difference scheme based on the flux of 
%Ducros et al. \cite{ducros2000high,gassner2016split} the scheme 
%is neither entropy stable nor energy preserving.
However, by applying Ducros flux, a splitting for the volume term is obtained and applying 
to this scheme now the correction terms, one gets the desired results which are presented 
in Figure~\ref{fig:Euler_DG_TGV__Du}.  Using for instance both the energy and entropy correction,
the scheme is similar to the flux differencing scheme using Ranocha's flux.
 In particular, the entropy can be conserved and the
 numerical solutions does not blow up during the computation.
 Here, we used this numerical flux in the surface term again only for comparison.
The numerical flux in the surface integral does not have a major influence on the stability when the correction terms are applied as well.
We get indistinguishable results for Ducros' flux and others in the surface integrals.
 To clarify, if we apply a proper splitting technique for the volume integral in the DG formulation
 and use the corrections, we obtain results equivalent to the flux difference approach using these highly sophisticated numerical fluxes.
But what is the problem with the classical DG scheme then? Actually, by a closer analysis, it can be recognized that when the DG scheme with entropy corrections results in negative densities and pressures at some nodal values, then neither the change to entropy variables nor the usage of the entropy correction is possible anymore. Thus, the test crashes.
 The splitting of the volume terms prevents this. An alternative would be the use of positivity preserving limiters or positivity preserving time-integration schemes, which should and will be part of future research. However, the observation made in this subsection can be summarized as follows:
% In our case, we do not have to take care about choosing adequate 
% numerical fluxes only a proper splitting of the volume term is needed and the corrections. 
% \textcolor{red}{ \Large These results have been obtained  Ranocha's flux for the inter-elements flux of the DG formulation. We have also performed the same simulation with or Ducros' with correction instead. The results are undistingushable.}% is used to compute the inter-elements flux, the results are virtualy the same.}%Finally, it has to be mentioned that in between the elements the numerical flux of 
%% Ranocha has been used. Indeed, nothing will change if Ducros flux is also applied between the elements.}
% \begin{summary}
 %Finally, the observation made in this subsection can be summarized as follows:
 \begin{itemize}
  \item If the structural properties (SBP) are given, the
  flux difference approach using the flux of
  \cite{ranocha2018comparison, ranocha2018thesis, ranocha2021preventing}
  should be applied since
  it yields an EC/D and KEP scheme with some advantages compared to other fluxes.
  \item If one prefers other splitting techniques the respective 
  corrections terms will always lead to the desired properties and comparable results 
  to the splitting using Ranocha's flux. This technique is universally applicable.
  \item If the structural properties are not given, the 
  correction terms can always be applied and guarantees the desired properties.
  It will also increase the stability of the scheme.
  \item If the physical constraints (i.e. positivity of the density and pressure) are ensured, 
 the entropy correction schemes together with a ``proper'' space discretization are at least as good as state-of-the-art schemes.
 \end{itemize}
It is clear that this is only a first comparison. Further experiments have to be conducted, including
for example limiters to avoid the negativity of the density and pressure as done before and seen in  Figure~\ref{fig:B3}. Further, 
also numerical errors and cancellations will be taken more into account. Both are part of future research. 
Finally, it should be stressed that the correction term is universal whereas
the numerical fluxes have to be constructed specifically for each equation.
If other problems are considered, one has to work on the construction 
of numerical fluxes again which have the requested properties. 
Therefore, more efforts have to been made and 
it is unknown if this always yields a result. Here, the presented correction term is an enrichment. In Section~\ref{sec:remark}, this topic will be revisited and some
examples motivating our formulation formulation will be discussed.
 %\end{summary}
% Pirozzoli~\cite{pirozzoli2011numerical} yields results that are very similar to
% the scheme using the flux of Ranocha~\cite[Theorem~7.8]{ranocha2018thesis}
% (not shown in the plot). In particular, the entropy can be conserved and the
% numerical solutions does not blow up during the computation.
\begin{figure}
\centering
  \includegraphics[width=\textwidth]{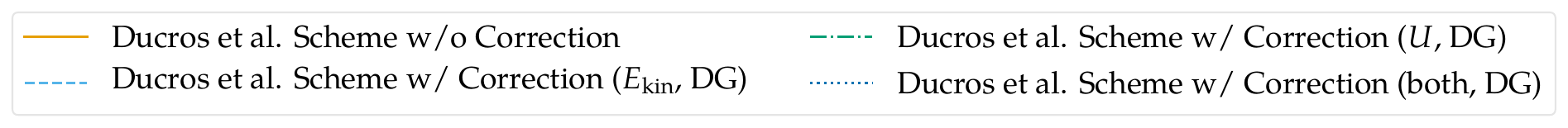}
  \\
  \begin{subfigure}{0.46\textwidth}
    \centering
    \includegraphics[width=\textwidth]{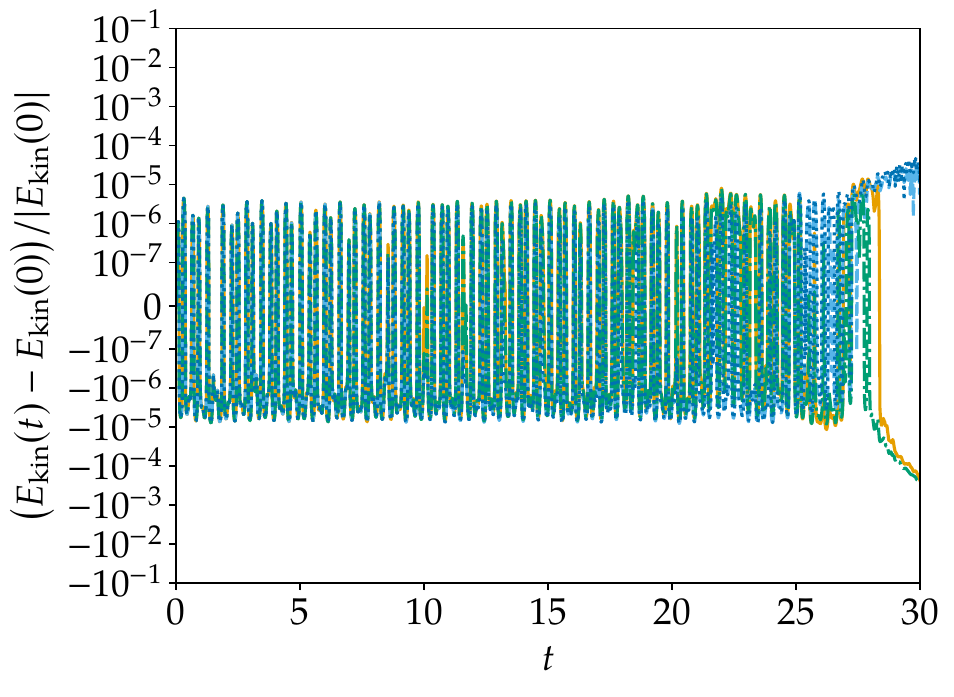}
    \caption{Kinetic Energy.}
  \end{subfigure}%
  \hspace{\fill}
  \begin{subfigure}{0.46\textwidth}
    \centering
    \includegraphics[width=\textwidth]{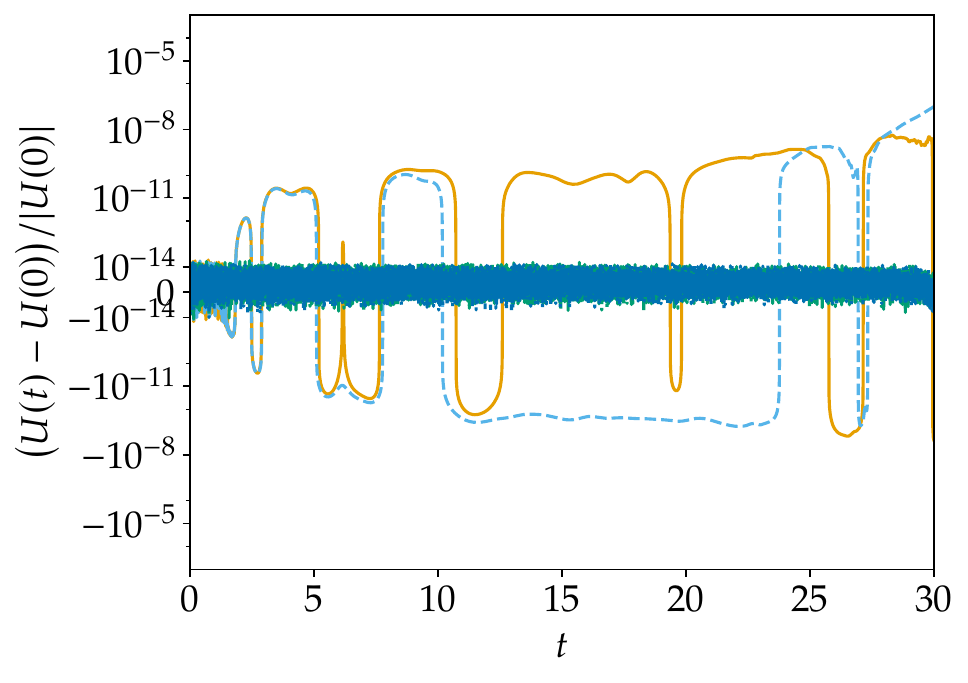}
    \caption{Entropy.}
  \end{subfigure}%
  \caption{Relative kinetic energy $\Ekin$ and entropy $U$ of numerical
           solutions of the compressible Euler equations with a Taylor-Green
           vortex initial condition. The nodal discontinuous Galerkin schemes
           use polynomials of degree $p=5$ on Lobatto Legendre nodes
           \cite[Eq.~25.4.14]{abramowitz1972handbook}, the flux difference
           split form with the flux of Ducros et al. \cite{ducros2000high,
           gassner2016split} in the interior of each element, and the
           numerical fluxes of \cite[Theorem~7.8]{ranocha2018thesis}
           between elements.}
  \label{fig:Euler_DG_TGV__Du}
\end{figure}

\subsection{Convergence Test}
Here, a convergence test using the initial condition
\begin{equation*}
\label{eq:euler-lin-adv}
\begin{aligned}
  \rho(0,x,y) &= 1 + \frac{1}{2} \sin(\pi x),
  &
  v_x(0,x,y) &= 1,
  \\
  v_y(0,x,y) &= 0,
  &
  p(0,x,y) &= 1,
\end{aligned}
\end{equation*}
for the Euler equations \eqref{eq:Euler} in the periodic domain $[0,2] \times [0,1]$
using DG schemes is conducted using $(N_x,N_y=1)$ elements in $x$-$y$ direction and polynomials of degree $p = 4$.
The error of the density at the final time $t = 6$ is computed using the Lobatto
Legendre quadrature.
As can be seen in Table~\ref{tab:euler_DG_convergence}, there are no significant
differences between the baseline central scheme and the ones applying a correction
term, in accordance with results of \cite{abgrall2018general}. For low resolutions,
the correction term \eqref{eq:operator-entropy-correction-RD} using a weighting
by the mass matrix yields slightly lower errors than the one proposed originally
in \cite{abgrall2018general}.
Some EOCs are a bit larger than the theoretical EOC of $p+1$. This might
be caused by superconvergence effects for DG methods, since the solution returns
to the initial condition and the errors are measured at nodal points, at which
the solution may observe superconvergence.

\begin{table}[!ht]
\centering
  \caption{Convergence rates for central and corrected DG schemes using polynomials
           of degree $p = 4$ for the Euler equations with initial condition
           \eqref{eq:euler-lin-adv}.}
  \label{tab:euler_DG_convergence}
  \begin{tabular*}{\linewidth}{@{\extracolsep{\fill}}*7c@{}}
    \toprule
    & \multicolumn{2}{c}{Central Scheme}
    & \multicolumn{2}{c}{Correction \eqref{eq:operator-entropy-correction-RD}}
    & \multicolumn{2}{c}{Correction \eqref{eq:correction}}
    \\
    $N_x$
    & $\norm{\rho - \rho_0}_M$ & EOC & $\norm{\rho - \rho_0}_M$ & EOC
    & $\norm{\rho - \rho_0}_M$ & EOC
    \\
    \midrule
     5 & 1.312e-05 &       & 1.025e-04 &       & 2.537e-04 \\
    10 & 1.394e-06 & +3.23 & 1.994e-06 & +5.68 & 3.041e-06 & +6.38 \\
    15 & 3.095e-07 & +3.71 & 3.217e-07 & +4.50 & 3.436e-07 & +5.38 \\
    20 & 6.426e-08 & +5.46 & 6.486e-08 & +5.57 & 6.714e-08 & +5.68 \\
    25 & 1.810e-08 & +5.68 & 1.819e-08 & +5.70 & 1.836e-08 & +5.81 \\
    \bottomrule
  \end{tabular*}
\end{table}

\section{Why a New Formulation? Some Motivating Examples}
\label{sec:remark}
In order to illustrate what we were explaining in the introduction, let us review shortly how entropy preserving schemes are classically obtained. An entropy conservative numerical flux $\fnum$ should satisfy Tadmor's condition:
\begin{equation}
\label{remi:tadmor}\jump{v} \fnum =\jump{\psi}
\end{equation} where $v$ is the entropy variable, ( $v=\nabla_uS$) and $\psi$ is the potential,
$$g=v^T f-\psi$$ where $g$ is the entropy flux.

For systems, $v$ and $ \fnum$ are vectors, so that \eqref{remi:tadmor} must be obtained in a similar way as the Roe average is obtained. This is where complications starts because one need to compute $\psi$ first. In the case of the Euler equations, $S=-\rho s$ where $s$ is the specific entropy, and since
$v=\nabla_{(\rho, \rho \mathbf{u}, E)} S$, we need an expression of $s$ in term of the conserved variables, or some other set of variables so that the mapping is one-to-one. For inert gases, the specific entropy is defined from Gibb's relation:
\begin{equation}\label{remi:gibbs}
T\; ds=d\varepsilon -\frac{p}{\rho^2}d\rho,
\end{equation}
where $T$ is the absolute temperature, $\varepsilon$ the specific heat.

What makes the things relatively easy for the calorically perfect gases case is that specific heat is a constant, as well as $\gamma=\tfrac{c_p}{c_v}$, so that $\varepsilon=c_v T$, and the specific entropy has a simple form
$$s= \log p-\gamma \log \rho$$ thanks to Mayer's relation
$c_p-c_v=\tfrac{\mathcal{R}}{M}$, $\mathcal{R}$ being Avogadro's number and $M$ the molar mass.

For a perfect gas, but a non-calorically perfect one, $\varepsilon=\int_{T_0}^T c_v(T) dT$, and since $p=\rho \frac{\mathcal{R}}{M}T$, we obtain
$$s=\int_{T_0}^T \frac{c_v(T)}{T} dT-\frac{\mathcal{R}}{M}\log(\rho)+s_0,$$ and the situation becomes way more complicated in general, and very dependent  on the structure of the function $T\mapsto c_v(T)$. It can be analytical (such as in the case of diatomic molecules in air) or tabulated (such as in the NIST-Janaf tables \cite{janaf}). In the analytical case, it is very likely possible to determine analytically the correct average, such as what has been done for the Roe average and for a complex equation of state, cf. \cite{abgrallEOS} for one example. All of this boils down to identifying the correct variables that make the algebra a bit magic thanks to differentiation rules such as
$$\Delta fg=\underline{f}\Delta g +\overline{g}\Delta f, \quad \Delta h=h_L-h_R,$$
where, for a "well-chosen" parameter $\lambda\in [0,1]$,
$$\overline{f}=\lambda f_R+(1-\lambda) f_L, \qquad \underline{f}=\lambda f_L+(1-\lambda) f_R.$$
In the case of the Roe average, the magic parameter is
$\lambda=\dfrac{\sqrt{\rho_L}}{\sqrt{\rho_R}+\sqrt{\rho_L}}$ and the algebra is much simplified by
$$\underline{\rho f}=\underline{\rho}\overline{f}, \qquad \underline{f}=\sqrt{\rho_L\rho_R}.$$

Unfortunately, this magic is very much case dependent, and can be very complicated to manage. This why any way of avoiding this tricky algebra for complicated forms of the entropy that a priori needs to have some analytical form is welcomed. Further note  that in most cases, this analytical form is not very clean or depends on a look-up table. This is only one example. The situation can become even worse if one asks kinetic energy preservation, and/or additional constraints, such as local preservation of the kinetic momentum.

\section{Summary and Conclusions}
\label{sec:summary}

In this paper, a reinterpretation and extension of entropy correction terms
proposed in \cite{abgrall2018general} is given.
Based on a characterization of these terms as solutions of certain optimization
problems, different correction terms are determined by the choice of discrete
norms. Additionally, these terms are adapted to numerical methods such as
discontinuous Galerkin  schemes. %and finite difference schemes. 
In numerical simulations,
the various correction terms are tested both in semidiscrete and fully discrete methods and compared also to flux differencing
approaches \cite{fisher2013high,gassner2016split}. These tests demonstrate that
there is no significant difference between the two basic choices of correction
terms (given in \eqref{eq:correction} and \eqref{eq:correction-abgrall-as-operator},
respectively). All of these optimization problems are solved
analytically, resulting in the same runtime performance without the need
for an optimization solver.

Using a flux difference formulation shows advantages compared to the
application of corrections terms to a simple central scheme.
While the correction
terms work and preserve the kinetic energy and/or conserve the entropy for the
Euler equations as expected, they cannot prevent a blow-up of numerical solutions for a demanding
Taylor--Green vortex type initial condition because the corrections terms cannot work correctly due to the generation of negative pressure and densities. If this can be avoided by further techniques,
the correction terms can be
applied successfully, yielding an entropy conservative scheme that does not blow
up during the computation.
% If a flux difference scheme is used
%as baseline scheme that is not entropy conservative, the correction terms can be
%applied successfully, yielding an entropy conservative scheme that does not blow
%up during the computation. 

%This can be interpreted as follows:  Firstly, the
%correction terms cannot ``fix a baseline scheme magically''; if the scheme without
%correction is too bad (such as a central scheme in these test cases), 
%the corrections
%work but cannot prevent a blow-up due to negative density/pressure. Secondly, the
%success of flux difference schemes cannot be attributed solely to resulting
%equations for the kinetic energy or entropy across elements. Inter-element/subcell
%local equations (which hold for these schemes) might play a role for the improved
%numerical stability.
This can be interpreted as follows: There is no free lunch. At some point,
one has to put work into the methods. 
The usage and implementation of the entropy correction term is quite simple. However,
one has to ensure that physical constraints (positivity of density/pressure in this case) are satisfied to obtain the desired results.
On the other hand, the flux difference framework
includes some assumptions on the quadrature, i.e.\ the SBP property
as discrete analogue of integration by parts, and the grid structure.
Here, a lot of effort has already been invested, as one can recognize by the
immense literature in this field. Also the construction of proper two-points fluxes is challenging and
it is unclear whether good fluxes can be found for every system as mentioned before.
The application of the correction
terms does not need these assumptions and is consequently more general. In particular,
it can be applied to members of the framework of residual distribution schemes 
and therefore to nearly any finite-element 
or finite volume based scheme on any grid. 
It is a universal tool.

However, it should be stressed out that the
success of flux differencing schemes cannot be attributed solely to resulting
equations for the kinetic energy or entropy across elements. Inter-element/subcell
local equations (which hold for these schemes) might play a role for the improved
numerical stability as well.
These experiments are only a first comparison and further tests have to be
done in this direction but this is  not topic of this current paper.

Besides the reinterpretation of the correction terms, their extension to new
applications has been conducted. Here, novel generalizations to entropy
inequalities, multiple constraints, and kinetic energy preservation for the Euler
equations are developed and verified by numerical simulations with a focus on
multiple constraints such as conserving the entropy and preserving the kinetic
energy simultaneously. Simultaneously, an approach is presented to obtain FEC schemes using only the entropy correction terms.
%Finally, the idea of such an optimization approach is applied to grid refinement
%processes and coarsening operators. An entropy stable/dissipative grid transfer
%is demonstrated analytically and numerically.

\appendix
\section{Additional Examples}\label{sec:appendix}

\subsection{Applications to Grid Refinement and Coarsening}
\label{sec:grid-refinement}
There is a certain interest in spatial adaptivity for EC/D semidiscretizations
\cite{friedrich2018entropyHP}.
If space and time adaption has to be performed, grid refinement and
coarsening operators transferring the numerical approximation from one grid to
another have to be constructed. Ideally, these should respect the entropy
dissipative behavior.
Consider  fine   and  coarse grids $\Omega_f$, $\Omega_c$. In the following,
it is assumed that a nodal discontinuous element type basis is used and that
there are associated coarse/fine and fine/coarse  interpolation operators $\Ifc$ and $\Icf$.  Numerical solutions and
operators on the coarse/fine grid will be written using upper indices $c/f$.
While standard interpolation and $L^2$ projection operators conserve the total
mass for reasonable choices of the basis functions and quadrature rule, it is
in general impossible to obtain strict inequalities for entropies. Hence, it is
of interest to apply an optimization approach similar to the one described in
the previous section.
However, there are important differences concerning the optimization approach to
entropy dissipativity between computations of spatial semidiscretizations and
refinement/coarsening operators:
\begin{itemize}
  \item
  Computing the time derivative of the entropy results in a linearization of
  the problem, making it much easier. In fact, a closed solution is given in
  the previous sections. For the refinement/coarsening operators, such a closed
  form solution does not seem to be available in general.
  \item
  The spatial semidiscretization has to be evaluated for every element and every time
  step, possibly multiple times. In contrast, the refinement/coarsening operators
  will be evaluated significantly fewer times.
\end{itemize}
To sum up, entropy stable refinement/coarsening operators are more expensive but
also used less often. Hence, they can be of interest in applications requiring
strict entropy inequalities. 
\subsubsection*{Refinement}
Given a solution $u^c$ on the coarse grid, an optimization problem for the solution
$u^f$ on the fine grid is
\begin{equation}
\label{eq:optimization-refinement}
%\begin{gathered}
  \min_{u^f} \frac{1}{2} \norm{u^f - \Ifc u^c}_{M^f}^2
%  \\
  \st 1^T M^f u^f = 1^T M^c u^c,\; 1^T M^f U(u^f) \leq 1^T M^c U(u^c).
%\end{gathered}
\end{equation}
Since the norm $\norm{\cdot}_{M^f}$ is strictly convex and  coercive and the entropy
$U$ is convex, there exists a unique solution of \eqref{eq:optimization-refinement}.
This convex problem can be solved using standard constrained optimization algorithms.
For the numerical examples presented below, Ipopt \cite{wachter2006implementation}
has been used via the interface provided by JuMP \cite{dunning2017jump},
using the default options.
After some simplifications, the first order necessary condition becomes
\cite[Theorem~12.1]{nocedal1999numerical}
\begin{equation}
  u^f = \Ifc u^c + \lambda \left( w(u^f) - \frac{1^T M^f w(u^f)}{1^T M^f 1} 1 \right),
\end{equation}
where $\lambda \geq 0$ is a Lagrange multiplier. Assuming that $\Ifc u^c$ is
already a good approximation to $u^f$, $w(u^f)$ can be substituted by $w(\Ifc u^c)$,
resulting in
\begin{equation}
\label{eq:optimization-refinement-simplified}
\begin{gathered}
  \min_{\lambda} \frac{1}{2} \lambda^2
  \\
  \st 1^T M^f U\Biggl(
    \Ifc u^c + \lambda \left( w(\Ifc u^c) - \frac{1^T M^f w(\Ifc u^c)}{1^T M^f 1} 1 \right)
  \Biggr) \leq 1^T M^c U(u^c).
\end{gathered}
\end{equation}
Compared to \eqref{eq:optimization-refinement}, where the whole vector
$u^f$ is the unknown, \eqref{eq:optimization-refinement-simplified} is
a scalar optimization problem which can be solved more efficiently.
Indeed, it is basically a scalar root finding procedure: If the interpolation
$u^c \to \Ifc u^c$ is entropy dissipative, $\lambda = 0$ can be chosen.
Otherwise, $\gamma$ has to be chosen with minimal absolute value such
that the inequality in \eqref{eq:optimization-refinement-simplified} is
satisfied as an equality.
\subsubsection*{Coarsening}
For a solution $u^f$ on the fine grid, an optimization problem for the solution
$u^c$ on the coarse grid is
\begin{equation}
\label{eq:optimization-coarsening}
%\begin{gathered}
  \min_{u^c} \frac{1}{2} \norm{\Ifc u^c - u^f}_{M^f}^2
%  \\
  \st 1^T M^c u^c = 1^T M^f u^f,\; 1^T M^c U(u^c) \leq 1^T M^f U(u^f).
%\end{gathered}
\end{equation}
This is again a convex optimization problem possessing a unique solution.

\subsubsection*{Numerical Examples}
Consider the interval $[-1, 1]$ with Lobatto-Legendre nodes for polynomials of
degree $\leq 6$ on the fine grid and $\leq 3$ on the coarse grid. The interpolation
operators are given by polynomial interpolation and the mass matrices as diagonal
matrices using the weights of the corresponding quadrature rule. The exponential
entropy $U(u) = \exp(u)$ is used in the following.
For the refinement experiment in Figure~\ref{fig:p-refinement}, the solution on
the coarse grid is obtained by evaluating $\exp(-x^2)$ on the coarse grid. While
the standard interpolation produces spurious entropy, both optimization approaches
satisfy the entropy inequality up to the tolerances specified for the algorithms.
The optimization results are visually indistinguishable and slightly dissipated
compared to the interpolant.
The coarsening experiment presented in Figure~\ref{fig:p-coarsening} is initialized 
by evaluating $\exp(x)$ on the fine grid. Both the interpolation and standard $L^2$
projection produce spurious amounts of entropy. In contrast, the result obtained by
optimization \eqref{eq:optimization-coarsening} satisfies the desired entropy
inequality.
\begin{figure}
\centering
  \begin{subfigure}[t]{0.47\textwidth}
    \includegraphics[width=\textwidth]{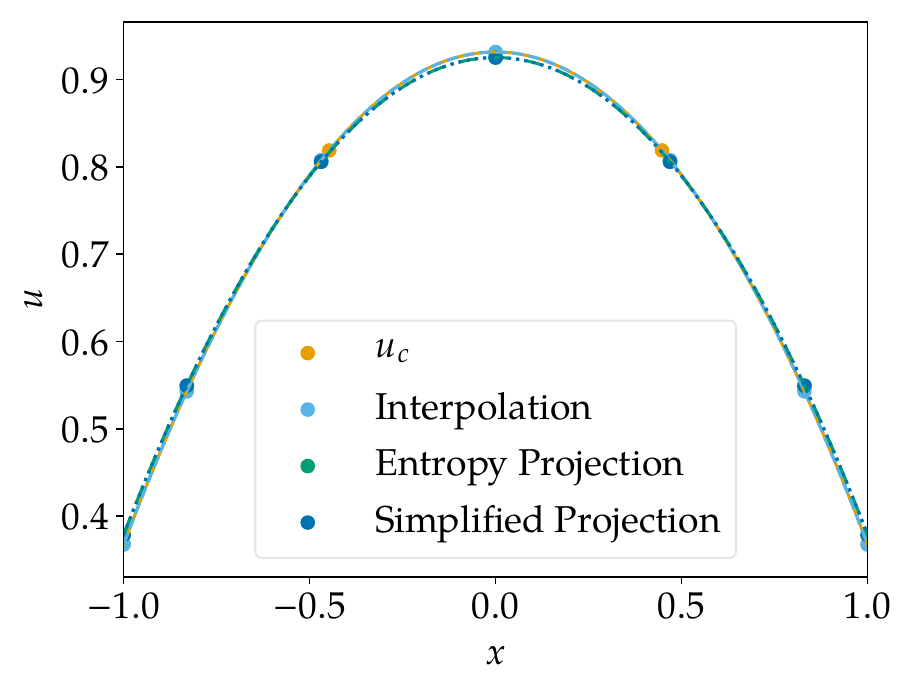}
    \caption{Refinement. The spurious entropy production of the interpolation is
             $3.566\,10^{-3}$, the  approach \eqref{eq:optimization-refinement}
             yields a slight entropy decay of $-3.980\,10^{-8}$ and the simplified
             problem \eqref{eq:optimization-refinement-simplified} results in
             $2.497\,10^{-8}$. Both optimization results deviate from the interpolation
             by $7.562\,10^{-3}$.}
    \label{fig:p-refinement}
  \end{subfigure}%
  \hspace{\fill}
  \begin{subfigure}[t]{0.47\textwidth}
    \includegraphics[width=\textwidth]{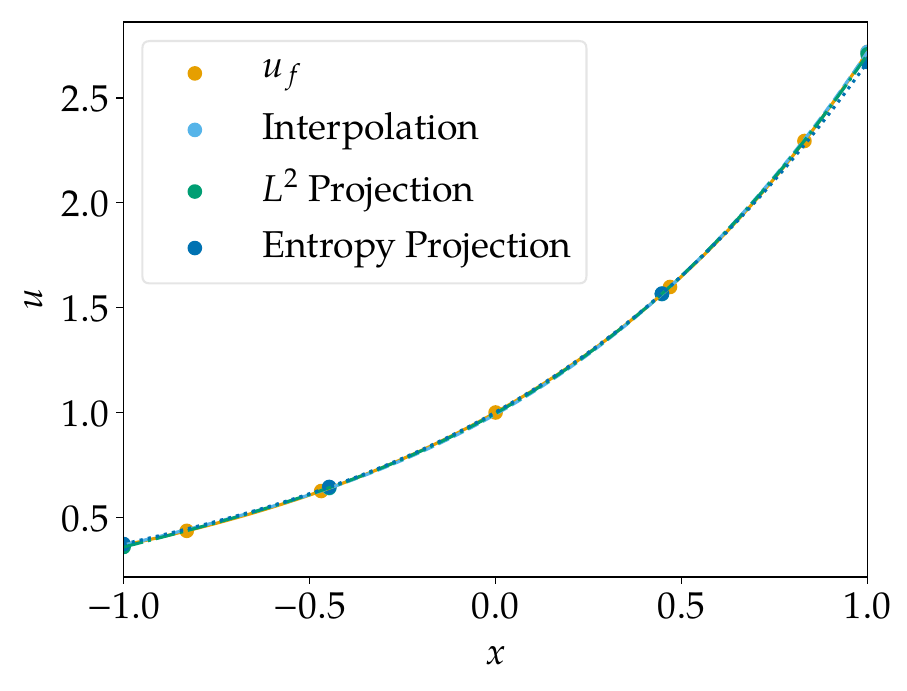}
    \caption{Coarsening. The spurious entropy production of the interpolation is
             $9.749\,10^{-2}$, the amount of entropy produced by the $L^2$ projection
             is $7.767\,10^{-2}$, and the optimization approach results in a slight
             entropy decay of $-3.429\,10^{-7}$.}
    \label{fig:p-coarsening}
  \end{subfigure}%
  \caption{$p$ refinement and coarsening using polynomials of degree $\leq 6$.}
%           on the fine grid and of degree $\leq 3$ on the coarse grid with
 %         exponential entropy $U(u) = \exp(u)$.}
\end{figure}

\subsection{Numerical Simulations - Linear Advection}\label{eq:advection}
In the following example, it is demonstrated that an energy/entropy
(in-)equality does not imply a good numerical approximation
and that the quality of the solution highly depends on the baseline method. 
One considers the linear advection equation 
\begin{equation*}
\begin{aligned}
  \partial_t u(t,x) + \partial_x u(t,x) &= 0, && x \in [-1,1], t \in (0,4), \\
  u(0,x) &= u_0(x) = \sin(\pi x), && x \in [-1,1],
\end{aligned}
\end{equation*}
with periodic boundary conditions. To solve this problem  a nodal DG scheme 
 with $N=16$ uniform elements with polynomial of degree $p=4$ 
 is used where for the calculation of the mass matrix (and then entropy correction term)
 the closed Newton-Cotes quadrature rule 
 (i.e. $p+1$ equidistant nodes including the
boundaries and maximal quadrature accuracy for these nodes, also known as
Boole's rule for $p=4$ \cite[Eq.~25.4.14]{abramowitz1972handbook}) is applied. 
It should be stressed that using equidistant nodes should actually avoided except 
one follows the approach  presented in \cite{glaubitz2020stable}.

The $L^2$ entropy/energy $U(u) = \frac{1}{2} u^2$ is finally considered 
and the spatial semidiscretization is integrated in time using SSPRK(10,4) of
\cite{ketcheson2008highly}, which is energy stable for linear semidiscretizations
\cite{ranocha2018L2stability}.
\begin{figure}
\centering
  \begin{subfigure}{0.45\textwidth}
    \centering
    \includegraphics[width=\textwidth]{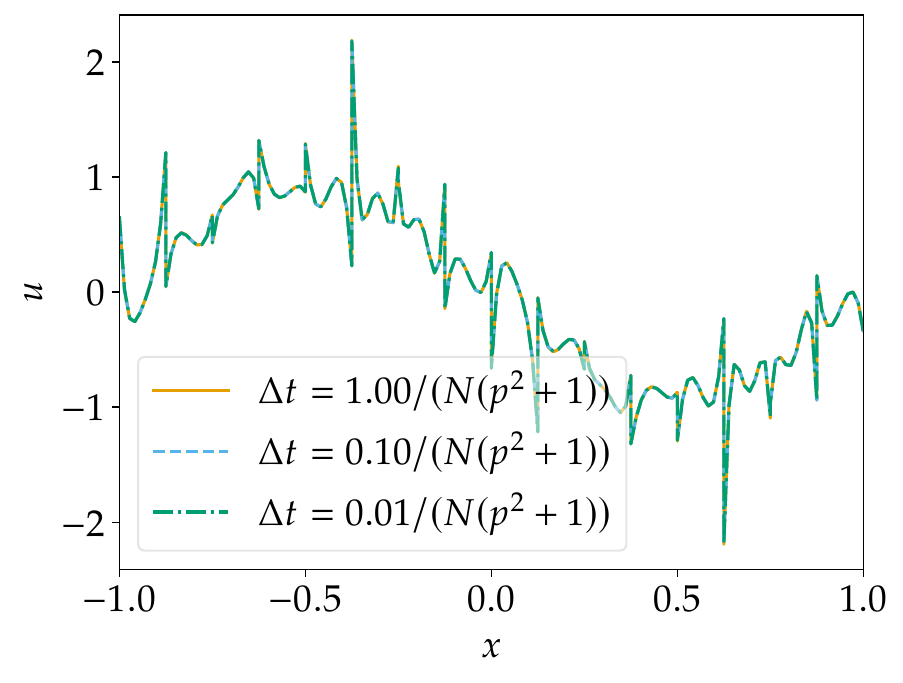}
    \caption{Numerical solutions.}
  \end{subfigure}%
  \hspace{\fill}
  \begin{subfigure}{0.45\textwidth}
    \centering
    \includegraphics[width=\textwidth]{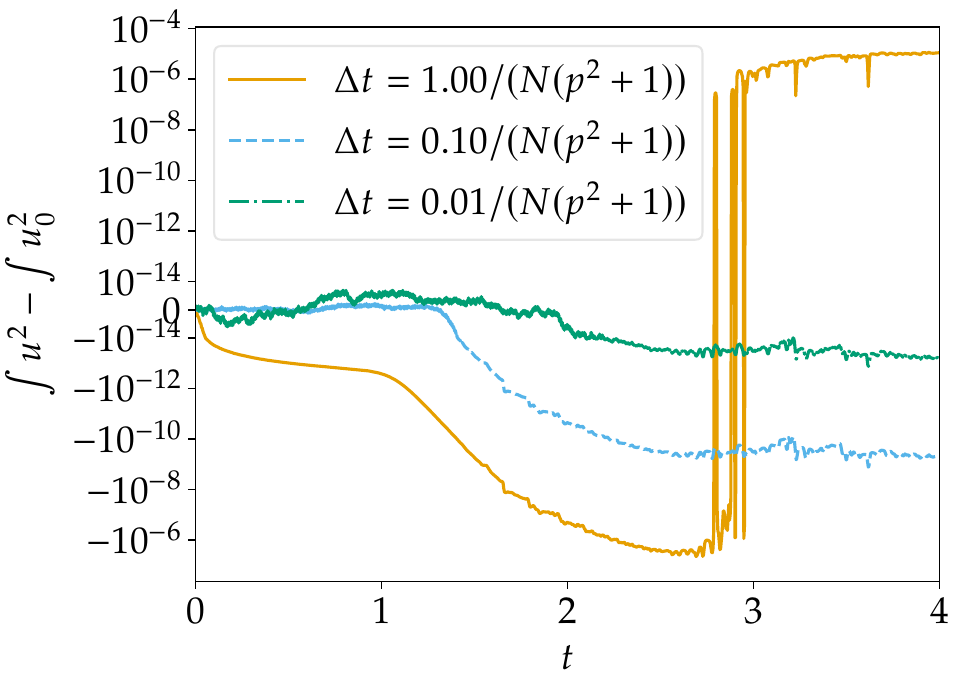}
    \caption{Entropy/energy.}
  \end{subfigure}%
  \caption{Numerical solutions of the linear advection equation and their
           energy/entropy, computed using a nodal DG scheme based on closed
           Newton Cotes quadrature.}
  \label{fig:linear_advection_NewtonCotes_DG}
\end{figure}
Results of these simulations are visualized in
Figure~\ref{fig:linear_advection_NewtonCotes_DG}. Since the semidiscretization
is not linear because of the correction terms, the fully discrete scheme does
not satisfy an entropy/energy inequality. However, the entropy/energy becomes
constant to machine accuracy if the time step $\Delta t$ is refined. Note that the decrease of entropy change corresponds exactly to the order of the time integration method, i.e.\ decreasing the time step by a factor of ten reduces
the entropy change by a factor of $10^4$. Nevertheless,
the numerical solutions are highly oscillatory. While the entropy correction reduces
the amount of oscillations compared to the numerical solution without correction
(not shown), the basically bad behavior of the nodal DG schemes with closed
Newton Cotes quadrature can still be observed.

\subsection{Numerical Simulations - SBP-SAT-FD and Correction terms}\label{subsect:SBP_SAT_FD}
We consider the compressible Euler equations together with Taylor-Green vortex
initial conditions as in Section~\ref{subsection_SBP_DG}.
Using classical  sixth order SBP central finite difference operators \cite{ranocha2018thesis} with $100$ nodes per coordinate
direction, the numerical solutions have been computed in the time interval
$t \in [0,30]$ with the fourth order, ten-stage, strong stability preserving
Runge-Kutta method of \cite{ketcheson2008highly} and a constant time step
$\Delta t = \Delta x / 8$.

\begin{figure}
\centering
 \includegraphics[width=\textwidth]{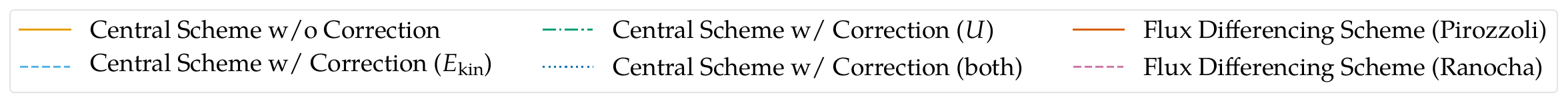}
 \\
 \begin{subfigure}{0.49\textwidth}
   \centering
   \includegraphics[width=\textwidth]{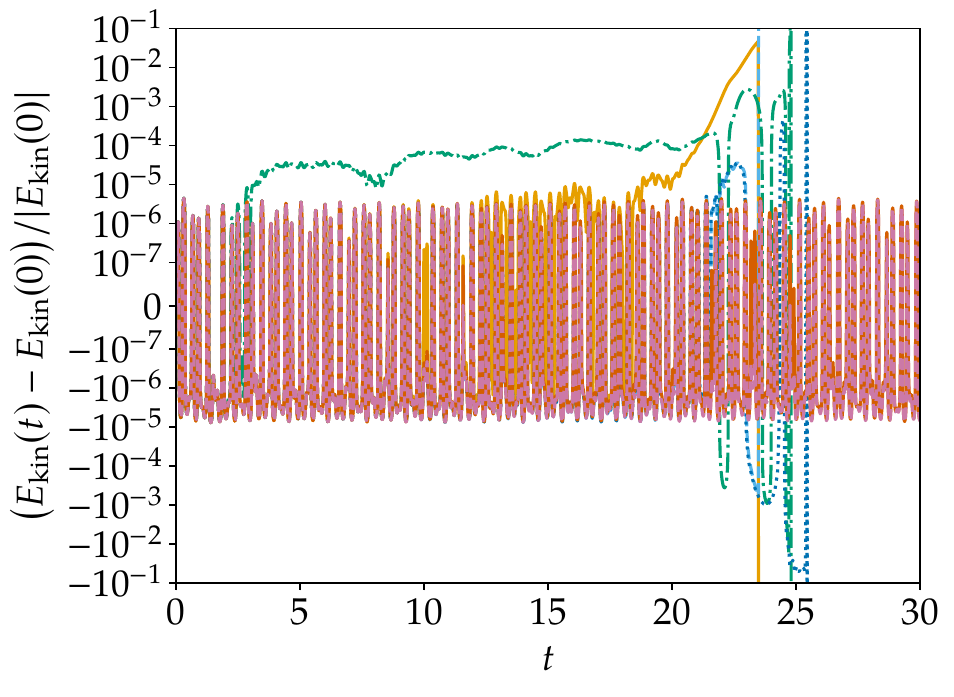}
   \caption{Kinetic Energy.}
 \end{subfigure}%
 \begin{subfigure}{0.49\textwidth}
   \centering
   \includegraphics[width=\textwidth]{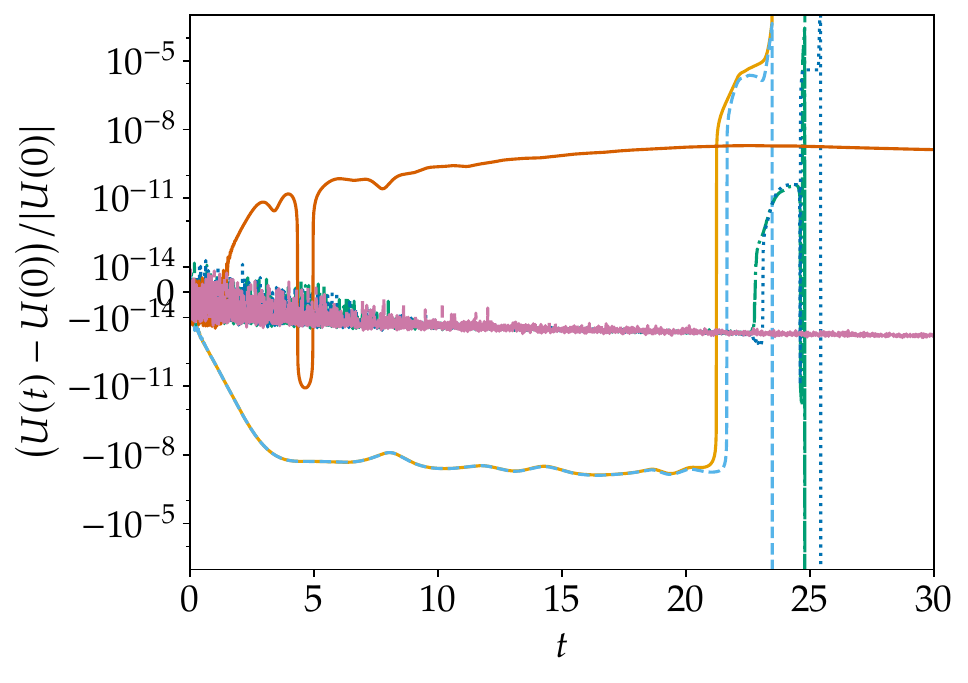}
   \caption{Entropy.}
 \end{subfigure}%
 \caption{Relative kinetic energy $\Ekin$ and entropy $U$
          of numerical solutions of the compressible Euler
          equations with a Taylor-Green vortex initial condition. The finite
          difference schemes use sixth order classical central stencils and
          either the classical central scheme with or without correction term
          or flux difference schemes with numerical fluxes of \cite{pirozzoli2011numerical}
          (see also \cite{gassner2016split}) or
          \cite[Theorem~7.8]{ranocha2018thesis}.}
 \label{fig:Euler_FD_TGV}
\end{figure}

The relative kinetic energy and entropy of numerical solutions obtained via
the classical central scheme with or without corrections or flux difference
schemes are visualized in Figure~\ref{fig:Euler_FD_TGV}.
The classical central scheme blows up at $t \approx 23.50$ and strong variations
of both the kinetic energy and the entropy can be observed shortly before.
Using a correction term for the kinetic energy reduces the variations of $\Ekin$
before the blow-up slightly and does not significantly influence the entropy
before $t \approx 20$. However, the scheme blows up at approximately the same time.

Using instead a correction term for the entropy, the scheme crashes a bit later
at $t \approx 24.82$. This correction term results in an increase of the kinetic
energy in this case. The entropy remains constant until it starts to vary before
the blow-up.
Applying the combined correction term for both entropy and kinetic energy yields
to some extent combined results: The oscillations of the kinetic energy are very
similar to those with the $\Ekin$ correction and the entropy develops similarly
to the one using only the $U$ correction. However, the scheme using the combined
correction term crashes a bit later ($t \approx 25.46$) than the other ones.

Here, the usage of the correction term can improve the performance of
the baseline scheme but can not rescue it. Slightly before the crash,
negative pressures and density can be observed. Hence, the correction term
cannot be applied and the simulation crashes. The applications of limiters
may resolve this problem and will be part of future research in this direction.
Nevertheless, substituting the central scheme with a non-trivial flux difference
discretization improves the numerical stability significantly if appropriate
numerical fluxes are used. In the following, the numerical fluxes of Pirozzoli
\cite{pirozzoli2011numerical} and Ranocha \cite[Theorem~7.8]{ranocha2018thesis}
are used. Both are kinetic energy preserving, i.e.
they satisfy \eqref{eq:operator-Ekin-target} analytically.
The last flux is also entropy
conservative, i.e. the corresponding scheme satisfies \eqref{eq:operator-entropy-target}.
The flux difference scheme does not crash during the computation if either one of
these fluxes is used. However, it should not be hidden that
even using these fluxes the test crashes around $t\approx 35$ but up to this point
the flux splitting approach using the flux \cite[Theorem~7.8]{ranocha2018thesis}
yield the best results up to our knowledge (without any additional techniques).
Both show some oscillations of the kinetic energy with an
amplitude similar to the central scheme with $\Ekin$ correction.
The entropy varies in time if Pirozzoli's flux is used and the amount of variation
is similar to the one for the central scheme before the blow-up. Remarkably, the
scheme using the flux of \cite[Theorem~7.8]{ranocha2018thesis} and the central
schemes with entropy correction conserve the entropy nearly to machine accuracy.
This can not necessarily be expected, since the scheme is only semidiscretely entropy
conservative and the time integration scheme can (and will typically) cause
variations of $U$.
As it can be seen, the flux-splitting with the used fluxes
works  better than a central scheme with correction. However,
it should be pointed out  that quite a lot of efforts have been made
to construct these types of fluxes and several structural properties
have to be assumed to use them. Instead the correction terms are  universal tools
and do not need these properties.
Further, if we apply the correction terms to a skew-symmetric or split
formulation of the volume term which does not need necessarily to be
energy preserving or entropy conservative then similar results
are obtain as for the flux splitting scheme using Ranocha's flux as seen before.

\section*{Acknowledgements}

P\"O has been funded by the SNF project (Number 175784), the UZH Postdoc Scholarship  (Number FK-19-104) and the Gutenberg Research Fellowship. 
The third author was supported by the German Research Foundation (DFG, Deutsche
Forschungsgemeinschaft) under Grant SO~363/14-1.
Funded by the Deutsche Forschungsgemeinschaft (DFG, German Research Foundation)
under Germany's Excellence Strategy EXC 2044-390685587, Mathematics Münster:
Dynamics-Geometry-Structure.
Research reported in this publication was supported by the
King Abdullah University of Science and Technology (KAUST).

\printbibliography

\end{document}